\newcolumntype{M}[1]{>{\centering\arraybackslash}m{#1}}
\newcommand{\s}[1]{{\mathcal #1}}
\newcommand{\sr}[1]{{\mathscr #1}}
\newcommand{\bb}[1]{{\mathbb #1}}
\newcommand{\firststep}{\setcounter{step}{1}\textbf{Step \arabic{step}:} }
\newcommand{\nextstep}{\stepcounter{step}\textbf{Step \arabic{step}:} }
\DeclareMathOperator{\Lip}{Lip}
\newtheorem{theorem}{Theorem} 
\newtheorem{corollary}[theorem]{Corollary}
\newtheorem{lemma}[theorem]{Lemma}
\newtheorem{proposition}[theorem]{Proposition}
\newtheorem{definition}[theorem]{Definition}
\newtheorem{remark}[theorem]{Remark}
\newtheorem{assumption}[theorem]{Assumption}
\numberwithin{equation}{section}
\numberwithin{theorem}{section}
\newcounter{step}
\begin{document}

	\title[MFG Iterated]
	{Cluster formation in iterated Mean Field Games}
	
	\author{P. Jameson Graber}
	\thanks{Research supported by the National Science Foundation under NSF Grant DMS-2045027.}
	\address{J. Graber: Baylor University, Department of Mathematics;\\
		Sid Richardson Building\\
		1410 S.~4th Street\\
		Waco, TX 76706\\
		Tel.: +1-254-710- \\
		Fax: +1-254-710-3569 
	}
	\email{Jameson\_Graber@baylor.edu}
	
	\author{Ellie Matter}
	\address{E.~Matter: Baylor University, Department of Mathematics;\\
		Sid Richardson Building\\
		1410 S.~4th Street\\
		Waco, TX 76706
	}
	\email{Ellie\_Carr3@baylor.edu}
	
	\author{Rafael Morales}
	\address{R.~Morales: Baylor University, Department of Mathematics;\\
		Sid Richardson Building\\
		1410 S.~4th Street\\
		Waco, TX 76706
	}
	\email{Rafael\_Morales2@baylor.edu}
	
	\author{Lindsay North}
	\address{L.~North: Baylor University, Department of Mathematics;\\
		Sid Richardson Building\\
		1410 S.~4th Street\\
		Waco, TX 76706
	}
	\email{Lindsay\_North1@baylor.edu}
	
	\date{\today}   
	
	\begin{abstract}
		We study a simple first-order mean field game in which the coupling with the mean field is only in the final time and gives an incentive for players to congregate.
		For a short enough time horizon, the equilibrium is unique.
		We consider the process of \emph{iterating} the game, taking the final population distribution as the initial distribution in the next iteration.
		Restricting to one dimension, we take this to be a model of coalition building for a population distributed over some spectrum of opinions.
		Our main result states that, given a final coupling of the form $G(x,m) = \int \varphi(x-z)\dif m(z)$ where $\varphi$ is a smooth, even, non-positive function of compact support, then as the number of iterations goes to infinity the population tends to cluster into discrete groups, which are spread out as a function of the size of the support of $\varphi$.
		We discuss the potential implications of this result for real-world opinion dynamics and political systems.
	\end{abstract}
	
	\keywords{mean field games}
	
	\maketitle
	
	
	\section{Introduction}
	
	Mean field games describe the strategic Nash equilibrium behavior of large (continuum) populations \cite{lasry07,cardaliaguet2010notes,carmona2017probabilistic,carmona2017probabilisticII,bensoussan2013mean}.
	A typical mean field game of first order can be described by a system of PDE as follows:
	\begin{equation}
		\begin{split}
			-\partial_t u + H(x,\nabla_x u) &= F(x,m),\\
			\partial_t m - \nabla_x \cdot \del{D_p H(x,\nabla_x u)m} &= 0,\\
			m(x,0) = m_0(x), \quad u(x,T) &= G(x,m(x,T)).
		\end{split}
	\end{equation}
	The \emph{Lasry-Lions monotonicity condition} \cite{lasry07} can be defined as
	\begin{equation}
		\int \del{F(x,m_1) - F(x,m_2)}\dif (m_1 - m_2)(x) \geq 0 \quad \forall m_1,m_2 \in \sr{P}(\bb{R}^d).
	\end{equation}
	It is well-known that when $F$ and $G$ satisfy the Lasry-Lions monotonicity condition, players have an incentive to spread out from one another, and for this reason one can expect uniqueness of the equilibrium on an arbitrarily long time horizon $T$.
	If we assume, to the contrary, that $F$ and $G$ exhibit an \emph{anti-}monotonicity property, then players have an incentive to \emph{congregate}, i.e.~form clusters.
	It is this latter case which interests us in the present study.
	In such a case, we do not in general expect uniqueness of the equilibrium, unless the time horizon $T$ is sufficiently small.
	Cf.~\cite{bardi2019non,bayraktar2020non,cirant2016stationary,cirant2019time} for some general results on existence, non-existence, and non-uniqueness under this anti-monotonicity assumption.
	If we assume a sufficiently small time horizon so as to guarantee the problem is well-posed, then even if we do see some clustering behavior, we do not expect it to be significant after playing the game only over a short time horizon. 
	A natural question is, what happens if we iterate the game?
	That is, suppose we take the final distribution of players to be our new \emph{initial} distribution, and then repeat this process over many iterations?
	Such an iterative process can be taken as a model for population dynamics resulting from a string of small strategic decisions on the part of individuals.
	We expect that after sufficiently many iterations, we will see players accumulate in tighter and tighter clusters.
	The purpose of this study is to see whether this in fact occurs and, if it does, to locate the clusters that emerge.
	
	We are going to keep the focus on the clustering phenomenon described above and not on abstract results for mean field games.
	For this reason we posit a simple game by making the following assumptions.
	We will take $F = 0$, so that only the final cost depends on the distribution.
	In addition, we will assume $H(x,p) = \frac{1}{2}\abs{p}^2$.
	Then the game becomes equivalent to the following fixed point problem.
	Define
	\begin{equation}
		J_t(x,y,m) = \frac{\abs{x-y}^2}{2t} + G(x,m).
	\end{equation}
	Let $y_t^m(x) := \operatorname{argmin} J_t(x,\cdot,m)$.
	We will specify below conditions under which $y_t^m(x)$ is a well-defined (single-valued) function.
	Define $F_{m_0}:\sr{P}(\bb{R}^d) \to \sr{P}(\bb{R}^d)$ by
	\begin{equation}
		F_{m_0}(m) = y_t^m \sharp m_0,
	\end{equation}
	where $m_0 \in \sr{P}(\bb{R}^d)$ is the given initial measure.
	Then $m$ is a Nash equilibrium if and only if $m = F_{m_0}(m)$.
	The interpretation of this game is that individuals are willing to make a small move from their present state if and only if it will land them in a region of sufficiently high population density to justify the move.
	
	By taking $t$ sufficiently small, we will be able to ensure that $F$ has a unique fixed point in the Wasserstein space $\sr{P}_1(\bb{R}^d)$, defined below.
	We now define the \emph{equilibrium map} $E:\sr{P}_1(\bb{R}^d) \to \sr{P}_1(\bb{R}^d)$ to be the map which, for any given initial measure $m_0$, outputs the equilibrium measure $m = F_{m_0}(m)$.
	We then wish to consider the following dynamical system:
	\begin{equation} \label{eq:dynamical system}
		\begin{split}
			m_0 &\quad \text{is given},\\
			m_{k+1} &= E(m_k), \quad k = 0,1,2,\ldots
		\end{split}
	\end{equation}
	The main theoretical contributions of this paper are as follows:
	\begin{enumerate}
		\item We give a straightforward algorithm to reliably compute $E(m)$ by \emph{discretizing the measure}, i.e.~by putting an empirical measure in its place.
		\item We study the asymptotic behavior of the dynamical system \eqref{eq:dynamical system}.
		We prove that for dimension $d = 1$ and couplings of the form $G(x,m) = \int \varphi(x-y)\dif m(y)$, where $\phi$ is an even, non-positive, smooth function of compact support, we can explicitly locate the fixed points of $E$ and show that they are asymptotically stable.
	\end{enumerate}
	The motivation behind this analysis is an interest in the implications for human behavior.
	Humans instinctually like to form groups. People form groups of different sizes for numerous reasons, but specifically, people join social groups to access grouping utility that is otherwise unavailable to lone individuals. Social groups also benefit when new members join. A larger platform could provide the necessary leverage to accomplish a group's goals. Moreover, group membership transforms individuals by providing them with a group identity and inviting members to take part in a shared belief system, and individuals affect their groups by adopting organizational roles and relating to other members. How people group and their groups' actions can completely change a population's status quo and reshape society at large. Cf.~\cite{coleman1994foundations,converse1964nature,forsyth2014group,van2013ideology}. Given the impact of grouping behavior on society, how does a population arrange itself considering only the individual proclivity to group? 
	
	The results of this paper show that a population may ``cluster'' into multiple discrete groups, despite having no aversion whatsoever to one another.
	Specifically, assume that $G(x,m) = \int \varphi(x-y)\dif m(y)$ is as in point (2) above and that the initial population distribution is sufficiently spread out over an interval compared to the radius of the support of $\varphi$.
	Then after a sufficient number of iterations of the game, players will myopically cluster into small clusters sufficiently separated from each other so that they will not see any incentive to move toward each other.
	For example, consider a population whose ideological positions on an issue are distributed over some interval (say, between ``left'' and ``right''). 
	If the initial distribution is sufficiently spread out, then over time the incentive to congregate will in fact induce ``polarization,'' i.e.~the formation of two distinct groups with no incentive to move toward one another.
	This occurs despite the fact that no individual has any preference for moving one direction over another; they are driven solely by the incentive to find themselves in a crowded region.
	Somewhat counter-intuitively, the desire to maximize crowd size results in multiple insular groups rather than all players congregating in the middle.
	We find this to be a thought-provoking result that may have implications for social science. 
	
	In the remainder of this introduction, we introduce some notation and assumptions on the data which will hold throughout this study.
	Then in Section \ref{sec:computation} we provide a simple proof that the problem is well-posed and give an algorithm to explicitly compute solutions when the initial condition is a given discrete (empirical) measure.
	In Section \ref{sec:asymptotic} we study the dynamical system \eqref{eq:dynamical system} over the set of discrete measures; this section contains our key theoretical results.
	Section \ref{sec:numerics} discusses numerical simulations that illustrate our main results.
	Finally, we give some concluding remarks in Section \ref{sec:conclusion}.
	
	
	\subsection{Notation and assumptions}
	
	We denote by $\s{P}_1(\bb{R}^d)$ the set of all Borel probability measures $m$ on $\bb{R}^d$ such that the first moment $\int \abs{x}\dif m(x)$ is finite.
	It will be endowed with the Wasserstein metric
	\begin{equation}
		W_1(\mu,\nu) = \inf\cbr{\int \abs{x-y}\dif \pi(x,y) : \pi \in \Pi(\mu,\nu)}
	\end{equation}
	where $\Pi(\mu,\nu)$ denotes the set of all couplings between $\mu$ and $\nu$.
	By Kantorovitch duality, we also have the characterization
	\begin{equation}
		W_1(\mu,\nu) = \sup\cbr{\int \phi \dif(\mu-\nu) : \enVert{\nabla \phi}_\infty \leq 1}.
	\end{equation}
	When $A$ and $B$ are symmetric matrices, we write $A \geq B$ to mean that $A-B$ is positive semi-definite, i.e.~all of its eigenvalues are non-negative.

	The basic assumptions on the data are given here:
	\begin{assumption}\label{Gassum}
		$G:\bb{R}^d \times \s{P}_1(\bb{R}^d) \to \bb{R}$ is continuous in both variables, twice differentiable with respect to the variable $x \in \bb{R}^d$.
		$D_x G(x,m)$ is $L_1$-Lipschitz with respect to the measure variable in the $W_1$ metric, i.e.~
		\begin{equation}
			\abs{D_x G(x,m) - D_x G(x,\tilde m)} \leq L_1 W_1(m,\tilde{m}) \quad \forall x \in \bb{R}^d, \ \forall m,\tilde{m} \in \s{P}_1(\bb{R}^d).
		\end{equation}
		Both $D_x G$ and $D_{xx}^2 G$ are bounded. 
		In particular, there exists $\lambda_1(G) \geq 0$ such that $D^2_{xx}G(x,m) + \lambda_1(G)I \geq 0$ for all $x,m$.
	\end{assumption}
	
	\section{Computation of solutions} \label{sec:computation}
	
	The purpose of this section is to propose an algorithm to compute $E(m)$, based on discretization of the measure.
	To begin with, however, we prove that our problem is indeed well-posed.
	We then provide a simple algorithm for computing solutions when $m$ is an empirical measure, i.e.~a sum of Dirac masses.
	
	\subsection{Well-posedness} \label{sec:well-posed}
	\begin{theorem} \label{thm:well-posed}
		Assume $0 < t^* < \frac{1}{\lambda_1(G) + L_1}$, where $L_1$ and $\lambda_1(G)$ are defined in Assumption \ref{Gassum}.
		If $0 < t \leq t^*$, then
		\begin{enumerate}
			\item $J_t(x,\cdot,m)$ has a unique minimizer for every $x$ and $m$, hence $y_t^m(x)$ and $F_{m_0}(m)$ are well-defined;
			\item $F_{m_0}$ is a contraction on $\s{P}_1(\bb{R}^d)$ and therefore has a unique fixed point, so that $E$ is well-defined;
			\item $E$ is continuous with the respect to the Wasserstein metric, i.e.~if $\{m_{0,n}\}$ is a sequence in $\s{P}_1(\bb{R}^d)$ such that $W_1(m_{0,n},m_{0}) \to 0$, then $W_1\del{E(m_{0,n}), E(m_0)} \to 0$.
		\end{enumerate}
	\end{theorem}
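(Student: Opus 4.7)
The three conclusions all derive from strong convexity of $J_t(x,\cdot,m)$ combined with the $W_1$-Lipschitz dependence of $D_x G$ on the measure argument. For claim (1), I would observe that the Hessian of $J_t(x,\cdot,m)$ at $y$ is $\tfrac{1}{t}I + D^2_{xx}G(y,m) \geq \bigl(\tfrac{1}{t} - \lambda_1(G)\bigr)I$ by Assumption \ref{Gassum}; since $t \leq t^* < 1/(\lambda_1(G)+L_1) < 1/\lambda_1(G)$, this is uniformly positive definite, so $J_t(x,\cdot,m)$ is strictly convex. Combined with coercivity of the quadratic term (using boundedness of $D_x G$), this yields a unique minimizer $y_t^m(x)$, and so $F_{m_0}(m) = y_t^m\sharp m_0$ is well-defined.

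For claim (2), fix $x$ and set $y := y_t^m(x)$, $\tilde y := y_t^{\tilde m}(x)$. The first-order optimality conditions give
\begin{equation*}
\frac{y - \tilde y}{t} = -\bigl(D_x G(y,m) - D_x G(\tilde y,\tilde m)\bigr).
\end{equation*}
Splitting the right-hand side through $D_x G(\tilde y, m)$, pairing with $y - \tilde y$, and using (a) the semi-convexity lower bound $\bigl(D_x G(y,m) - D_x G(\tilde y, m)\bigr)\cdot(y-\tilde y) \geq -\lambda_1(G)|y-\tilde y|^2$ (obtained by integrating $D^2_{xx}G \geq -\lambda_1(G)I$ along the segment from $\tilde y$ to $y$) together with (b) the $L_1$-Lipschitz bound on $D_x G$ in $m$, I would derive
\begin{equation*}
(1 - t\lambda_1(G))\,|y - \tilde y| \leq tL_1\, W_1(m,\tilde m).
\end{equation*}
Taking pushforwards by $m_0$ then gives $W_1(F_{m_0}(m),F_{m_0}(\tilde m)) \leq c\,W_1(m,\tilde m)$ with $c := tL_1/(1 - t\lambda_1(G))$, and $c < 1$ is equivalent to $t(\lambda_1(G) + L_1) < 1$, which is precisely our standing hypothesis. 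Banach's fixed point theorem then produces the unique fixed point $m = E(m_0)$.

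For claim (3), let $m_n := E(m_{0,n})$ and $m := E(m_0)$. By the triangle inequality,
\begin{equation*}
W_1(m_n, m) \leq W_1\bigl(F_{m_{0,n}}(m_n), F_{m_{0,n}}(m)\bigr) + W_1\bigl(F_{m_{0,n}}(m), F_{m_0}(m)\bigr).
\end{equation*}
The first term is bounded by $c\,W_1(m_n,m)$ via claim (2). For the second, a parallel Step 2 estimate---now comparing $y_t^m(x)$ to $y_t^m(\tilde x)$ and absorbing the $(x-\tilde x)$ term instead of the $W_1(m,\tilde m)$ term---shows $\Lip(y_t^m) \leq 1/(1 - t\lambda_1(G))$, so the pushforward bound controls the second term by $\Lip(y_t^m)\,W_1(m_{0,n}, m_0)$. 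Rearranging and letting $n \to \infty$ concludes the argument.

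The one genuinely delicate point is the inequality in Step 2: one must combine the Lipschitz bound on $D_x G$ in $m$ with the sharp semi-convexity constant $\lambda_1(G)$ without losing any factor, and the critical threshold $t^* < 1/(\lambda_1(G)+L_1)$ emerges precisely from this tight combination. The remainder of the argument---coercivity, Banach's theorem, and the pushforward estimate---is standard.
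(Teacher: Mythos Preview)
Your proposal is correct and follows essentially the same strategy as the paper: strict convexity via the Hessian lower bound for (1), the split-and-pair estimate on the first-order conditions yielding $|y_t^m(x)-y_t^{\tilde m}(x)|\leq \frac{tL_1}{1-t\lambda_1(G)}W_1(m,\tilde m)$ for (2), and for (3) a two-term decomposition in which one piece is absorbed by the contraction constant and the other is controlled by the Lipschitz constant of $y_t^m$. The only cosmetic differences are that the paper carries out Step~3 through Kantorovich duality rather than a direct $W_1$ triangle inequality, and obtains $\Lip(y_t^\mu)$ via the inverse function theorem rather than your ``parallel Step~2'' argument---these are equivalent.
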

	
	\begin{proof}
		\firststep

		To find the minimizer for $J_t(x,\cdot, m)$ we start by taking the derivative with respect to $y$ and setting it equal to $0$
		\begin{equation}
			D_y[J_t(x,y,m)]=\frac{y-x}{t}+D_yG(y,m)=0
		\end{equation}
		So we see there is a critical point: 
		\begin{equation}\label{eq:critpt}
			y+tD_yG(y,m)=x 
		\end{equation}
		Then taking the 2nd derivative with respect to $y$ and using \ref{Gassum}, we see
		\begin{equation}
			D^2_{yy}[J_t(x,y,m)]=\frac{1}{t}I+D^2_{yy}G(y,m) \geq \del{\frac{1}{t}-\lambda_1(G)}I
		\end{equation}
		where $I$ is the identity matrix.
		Thus $D^2_{yy}[J_t(x,y,m)]$ is positive definite provided that $0<t\leq t^* < \frac{1}{\lambda_1(G)}$, and since $\frac{1}{\lambda_1(G) + L_1} \leq \frac{1}{\lambda_1(G)}$ this is satisfied.
		Then by the second derivative test, the critical point in \ref{eq:critpt} is the unique minimizer.

		\nextstep

		Let $\varphi$ be Lipschitz with $\enVert{\varphi}_{\Lip}\leq1$
		\begin{equation}
				\int\varphi \dif(y_t^m\#m_0-y_t^{\tilde{m}}\#m_0)=\int[\varphi(y_t^m(x))-\varphi(y_t^{\tilde{m}}(x))]\dif m_0(x)
				\leq \int |y_t^m(x)-y_t^{\tilde{m}}(x)|\dif m_0(x).
		\end{equation}
		Using \eqref{eq:critpt}, we get
		\begin{equation}
			\begin{split}
				y_t^m-y_t^{\tilde{m}}&=x-tD_yG(y_t^m,m)-x+tD_yG(y_t^{\tilde{m}},\tilde{m})\\
				&=-t[D_yG(y_t^m,m)-D_yG(y_t^m,\tilde{m})+D_yG(y_t^m,\tilde{m})-D_yG(y_t^{\tilde{m}},\tilde{m})]
			\end{split}
		\end{equation}
		Taking the dot product of each side with $y_t^m-y_t^{\tilde{m}}$ yields 
		\begin{equation}
			\begin{split}
				|y_t^m-y_t^{\tilde{m}}|^2=-t[(y_t^m-y_t^{\tilde{m}})\cdot(D_yG(y_t^m,m)-D_yG(y_t^m,\tilde{m}))\\
				+(y_t^m-y_t^{\tilde{m}})\cdot(D_yG(y_t^m,\tilde{m})-D_yG(y_t^{\tilde{m}},\tilde{m}))]
			\end{split}
		\end{equation}
		Since $(y_t^m-y_t^{\tilde{m}})\cdot(D_yG(y_t^m,\tilde{m})-D_yG(y_t^{\tilde{m}},\tilde{m}))\geq -\lambda_1(G)\abs{y_t^m - y_t^{\tilde{m}}}^2$,
		\begin{equation}
			(1-t\lambda_1(G))|y_t^m-y_t^{\tilde{m}}|^2\leq |t(y_t^m-y_t^{\tilde{m}})\cdot(D_yG(y_t^m,m)-D_yG(y_t^m,\tilde{m}))|
		\end{equation}
		Now we have
		\begin{equation} \label{eq:ym-ym'}
				|y_t^m-y_t^{\tilde{m}}|\leq \frac{t}{1-t\lambda_1(G)}|D_yG(y_t^m,m)-D_yG(y_t^m,\tilde{m})|   
				\leq \frac{tL_1}{1-t\lambda_1(G)}W_1(m,\tilde{m})
		\end{equation}
		where $L_1$ is the Lipschitz constant of $D_y G$ with respect to $m$ in the $W_1$ metric. Therefore,
		\begin{equation}
			\int|y_t^m(x)-y_t^{\tilde{m}}(x)|\dif m_0(x)\leq \frac{tL_1}{1-t\lambda_1(G)}W_1(m,\tilde{m})
		\end{equation}
		So,
		\begin{equation}
				W_1(y_t^m\#m_0,y_t^{\tilde{m}}\#m_0)=\sup_{||\varphi||_{\Lip\leq 1}} \int\varphi \dif(y_t^m\#m_0-y_t^{\tilde{m}}\#m_0)
				\leq \frac{tL_1}{1-t\lambda_1(G)}W_1(m,\tilde{m})
		\end{equation}
		We deduce that for $t \leq t^*<\frac{1}{\lambda_1(G) + L_1}$, $F_{m_0}$ is a contraction on $\s{P}_1(\bb{R}^d)$

		\nextstep

		Set $\mu_0:=E(m_0)$ and $\mu_n:=E(m_{0,n})$.		
		By using the inverse function theorem on Equation \eqref{eq:critpt}, we deduce that for $0 < t \leq t^* < \frac{1}{\lambda_1(G) + L_1}$ we have $\nabla y_t^{\mu_n}(x) = (I + t D_{yy}^2 G(y_t^{\mu_n}(x),\mu_n))^{-1}$.
		Since $D^2_{yy}G$ is bounded we get $\enVert{\nabla y_t^{\mu_n}}<C_0$ such that $C_0$ does not depend on $n$, but only on $t$ and the bound on $D^2_{yy}G$.

		Let $\phi$ such that $\enVert{\nabla \phi}_\infty \leq 1$.  Then
		\begin{equation}
			\begin{split}\label{eq2}
				\int\phi d(\mu_0-\mu_n) &= \int\del{\phi\circ y^{\mu_0}_t-\phi\circ y^{\mu_n}} \dif m_0 + \int\phi\circ y_t^{\mu_n}\dif (m_0-m_n)\\
				&\leq \int \enVert{y^{\mu_0}_t-y^{\mu_n}_t} \dif m_0 + \int\phi\circ y_t^{\mu_n}\dif (m_0-m_n).
			\end{split}
		\end{equation}		
		Since $\enVert{\nabla \phi\circ y_t^{\mu_n}}_\infty \leq C_0$, by taking the supremum with respect to $\phi$ and applying \eqref{eq:ym-ym'} we have
		\begin{equation}
			W_1(\mu_0,\mu_n)\leq \frac{tL_1}{1-t\lambda_1(G)}W_1(\mu_0,\mu_n) + C_0 W_1(m_0,m_n).
		\end{equation}
		Since $t \leq t^*$ and $\frac{t^*L_1}{1-t^*\lambda_1(G)} < 1$, we let $n \to \infty$ to conclude that $\mu_n$ converges to $\mu_0$ respect to the $W_1$ metric.
	\end{proof}
	
	\subsection{Discretization} \label{sec:discrete}
	We will now discretize the problem by considering only empirical distributions as initial measures.
	We then give an algorithm, based on standard Picard iteration, to compute the equilibrium to any specified degree of accuracy.
	As empirical measures are dense in $\s{P}_1(\bb{R}^d)$, by part 3 of Theorem \ref{thm:well-posed}, our algorithm can in fact approximate the equilibrium for arbitrary initial measures to any specified degree of accuracy.
	
	Let us now suppose that the initial measure is an empirical distribution, i.e.~$m_0 = \sum_{j=1}^N a_j \delta_{x_j}$ for some points $x_1,\ldots,x_N \in \bb{R}^d$ and some non-negative numbers $a_j$ that sum to 1.
	Then
	\begin{equation}
		F_{m_0}(m) = y_t^m \sharp m_0 = \sum_{j=1}^N a_j \delta_{y_t^m(x_j)}.
	\end{equation}
	It follows that any equilibrium $m$ must itself be an empirical measure of the form $m = \sum_{j=1}^N a_j \delta_{z_j}$ for some points $z_1,\ldots,z_N \in \bb{R}^d$.
	For any vector ${\bf z} = (z_1,\ldots,z_N) \in \bb{R}^{dN}$, define
	\begin{equation}
		\begin{split}
			\tilde y_t^{\bf z}(x) &= y_t^m(x) \quad \text{where} \quad m = \sum_{j=1}^N a_j \delta_{z_j},\\
			\tilde F({\bf z}) &= \tilde y_t^{\bf z}({\bf x}) := (\tilde y_t^{\bf z}(x_1),\ldots,\tilde y_t^{\bf z}(x_N)).
		\end{split}
	\end{equation}
	To find an equilibrium, it is enough to find a fixed point of $\tilde F$.
	To see this, note that if ${\bf z} = \tilde F({\bf z})$, then we have
	\begin{equation}
		F_{m_0}\del{\sum_{j=1}^N a_j \delta_{z_j}} = \sum_{j=1}^N a_j \delta_{\tilde y_t^{\bf z}(x_j)}
		= \sum_{j=1}^N a_j \delta_{z_j}.
	\end{equation}
	
	It is not hard to see that under the same hypotheses as in Theorem \ref{thm:well-posed}, $\tilde F$ is a contraction, and it therefore has a unique fixed point, which we will denote
	\begin{equation} \label{eq:tilde E}
		\tilde E({\bf x}) := {\bf z} = \tilde F({\bf z}).
	\end{equation}
	By classical theory, one can define
	\begin{equation}
		\begin{split}
			{\bf z}_0 &= {\bf x},\\
			{\bf z}_{k+1} &= \tilde F({\bf z}_k)
		\end{split}
	\end{equation}
	and get ${\bf z}_k \to {\bf z} = \tilde F({\bf z})$.
	The error estimate is
	\begin{equation}
		\enVert{{\bf z}_k - {\bf z}} \leq \frac{\alpha^n}{1-\alpha}\enVert{\tilde F({\bf x}) - {\bf x}},
	\end{equation}
	where $\alpha \in (0,1)$ is the Lipschitz constant for $\tilde F$.
	\begin{remark}
		Although the contraction mapping theorem allows us to take any initial condition we like, we choose ${\bf z}_0 = {\bf x}$ for the following common sense reason.
		Since the time horizon is small, players cannot move much from their initial distribution, so the final distribution is sure to be close to ${\bf x}$; hence a good initial guess is ${\bf x}$ itself.
	\end{remark}
	In principle, this simple algorithm is enough to solve our problem.
	However, we cannot compute $\tilde F({\bf z})$ explicitly. 
	We will now point out that it is enough to approximate $\tilde F({\bf z})$ sufficiently well.
	\begin{lemma} \label{lem:approximate contraction}
		Suppose $\cbr{{\bf z}_k}$ is a sequence such that
		\begin{equation} \label{eq:almost recursion}
			\enVert{{\bf z}_{k+1} - \tilde F({\bf z}_k)} \leq \varepsilon_k,
		\end{equation}
		where $\varepsilon_k \leq \alpha^k$ and $\alpha < 1$ is the Lipschitz constant for $\tilde F$.
		Then ${\bf z}_k \to {\bf z}$, where ${\bf z}$ is the unique fixed point of $\tilde F$.
		We have an error estimate:
		\begin{equation} \label{eq:error estimate}
			\enVert{{\bf z}_k - {\bf z}} \leq \alpha^k\del{\frac{2\del{k-\alpha k+1}}{\del{1-\alpha}^2}+\frac{\enVert{\mathbf{z}_1-\mathbf{z}_0}}{1-\alpha}}.
		\end{equation}
	\end{lemma}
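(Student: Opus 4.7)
The plan is to mimic the standard Banach fixed point argument but account for the per-step slack $\varepsilon_k$ in the recursion. Let $\mathbf{z}$ denote the unique fixed point of $\tilde F$ (whose existence is granted by the remarks preceding the lemma). My first move is to set $a_k := \enVert{\mathbf{z}_k - \mathbf{z}}$ and derive the one-step inequality
\[
a_{k+1} \leq \enVert{\mathbf{z}_{k+1} - \tilde F(\mathbf{z}_k)} + \enVert{\tilde F(\mathbf{z}_k) - \tilde F(\mathbf{z})} \leq \varepsilon_k + \alpha a_k \leq \alpha^k + \alpha a_k.
\]
Iterating yields $a_k \leq \alpha^k a_0 + \sum_{j=0}^{k-1}\alpha^{k-1-j}\varepsilon_j \leq \alpha^k a_0 + k\alpha^{k-1}$, which already proves convergence $\mathbf{z}_k \to \mathbf{z}$ since both terms vanish in the limit. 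This is the easy half.

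To obtain the bound in the advertised form, which involves $\enVert{\mathbf{z}_1 - \mathbf{z}_0}$ rather than the a priori unknown $a_0$, I would switch to tracking the consecutive gaps $b_k := \enVert{\mathbf{z}_k - \mathbf{z}_{k-1}}$. Inserting $\tilde F(\mathbf{z}_k)$ and $\tilde F(\mathbf{z}_{k-1})$ and using the triangle inequality together with the Lipschitz property,
\[
b_{k+1} \leq \enVert{\mathbf{z}_{k+1} - \tilde F(\mathbf{z}_k)} + \enVert{\tilde F(\mathbf{z}_k) - \tilde F(\mathbf{z}_{k-1})} + \enVert{\tilde F(\mathbf{z}_{k-1}) - \mathbf{z}_k} \leq \alpha b_k + \varepsilon_k + \varepsilon_{k-1}.
\]
Using $\varepsilon_j \leq \alpha^j$ and $\alpha < 1$, the forcing is controlled by $2\alpha^{k-1}$, and iterating gives a closed-form estimate of the type $b_{k+1} \leq \alpha^k \enVert{\mathbf{z}_1 - \mathbf{z}_0} + 2k\alpha^{k-1}$. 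Since $\{\mathbf{z}_k\}$ is now known to be Cauchy with limit $\mathbf{z}$, I would pass to the limit in $\enVert{\mathbf{z}_m - \mathbf{z}_k} \leq \sum_{j=k+1}^m b_j$ and evaluate the resulting tail using the classical identities $\sum_{i \geq k} \alpha^i = \alpha^k/(1-\alpha)$ and $\sum_{i \geq k} i\alpha^{i-1} = k\alpha^{k-1}/(1-\alpha) + \alpha^k/(1-\alpha)^2$, then rearrange to extract a common factor of $\alpha^k$.

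The principal obstacle is purely bookkeeping: the natural estimate produces $\alpha^{k-1}$ factors coming from the $\varepsilon_{k-1}$ terms, and some care is required to rewrite $\alpha^{k-1} = \alpha^k/\alpha$ and absorb the resulting constants into the $(1-\alpha)^{-2}$ factor to match the exact form of the bound. The mechanism behind the $2(k(1-\alpha)+1)/(1-\alpha)^2$ coefficient is transparent once the geometric-series computation $\sum_{i \geq k} i\alpha^{i-1}$ is performed: the $k/(1-\alpha)$ piece supplies the linear-in-$k$ term, while the $1/(1-\alpha)^2$ piece accounts for the constant. Beyond this computation, no new ideas are needed.
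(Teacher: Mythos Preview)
Your approach is essentially the paper's: bound the consecutive gaps $b_{k+1}=\enVert{\mathbf{z}_{k+1}-\mathbf{z}_k}$ via $b_{k+1}\le \alpha b_k+\varepsilon_k+\varepsilon_{k-1}$, iterate, sum the tail, and let $m\to\infty$. Your preliminary shortcut through $a_k=\enVert{\mathbf{z}_k-\mathbf{z}}$ is a clean addition the paper does not make; it already settles convergence before any bookkeeping begins.

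One caveat on the constant. Replacing $\varepsilon_k+\varepsilon_{k-1}$ by the crude $2\alpha^{k-1}$ yields $b_{k+1}\le \alpha^k\enVert{\mathbf{z}_1-\mathbf{z}_0}+2k\alpha^{k-1}$, and the resulting tail sum carries a stray factor $\alpha^{-1}$ in the $k$-term. Contrary to your remark, this factor cannot be ``absorbed'' into $(1-\alpha)^{-2}$: for large $k$ one has $k\alpha^{k-1}/(1-\alpha) > \alpha^k\bigl(2k/(1-\alpha)+2/(1-\alpha)^2\bigr)$, so the crude route gives a valid but strictly weaker estimate than the one stated. The paper avoids this by keeping the two forcing terms separate when iterating, obtaining $\sum_{j=0}^{k}\alpha^{k-j}(\varepsilon_{j+1}+\varepsilon_j)\le (k+1)(\alpha^{k+1}+\alpha^k)\le 2(k+1)\alpha^k$, which then feeds directly into the derivative-of-geometric-series computation $2\sum_{k\ge n}(k+1)\alpha^k = 2\alpha^n(n-\alpha n+1)/(1-\alpha)^2$ and reproduces the stated bound exactly. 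If you want the precise constant, do the same; otherwise your argument is complete.
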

	
	\begin{proof}
		We have  $\enVert{\tilde F(\textbf{z}_{k+1})-\tilde F(\textbf{z}_{k})} \leq \alpha \enVert{\textbf{z}_{k+1}-\textbf{z}_{k}}$ for every $k$. Applying \eqref{eq:almost recursion} and the triangle inequality,
		\begin{align*}
			\enVert{\textbf{z}_{k+1}-\textbf{z}_{k}} &\leq \enVert{\textbf{z}_{k+1}-\tilde F(\textbf{z}_k)}+\enVert{\tilde F(\textbf{z}_{k})-\tilde F(\textbf{z}_{k-1})}+\enVert{\textbf{z}_{k}-\tilde F(\textbf{z}_{k-1})}\\
			&\leq \varepsilon_k+\alpha\enVert{\textbf{z}_{k}-\textbf{z}_{k-1}}+\varepsilon_{k-1}.
		\end{align*}
		By iteration we get
		\begin{equation}\label{eq:fixpt}
			\enVert{\textbf{z}_{k+1}-\textbf{z}_{k}} \leq \alpha^k \enVert{\textbf{z}_{1}-\textbf{z}_{0}} +\sum_{j=0}^{k}\alpha^{k-j}\del{\varepsilon_{j+1}+\varepsilon_{j}}.
		\end{equation}

		Now taking $m,n\in\bb{N}$ with $m>n$, we use \eqref{eq:fixpt} to estimate
		\begin{align*}
			\enVert{\mathbf{z}_m - \mathbf{z}_n} 
			\leq \sum_{k=n}^{m-1}\enVert{\mathbf{z}_{k+1} - \mathbf{z}_{k}}
			&\leq \sum_{k=n}^{m-1}\alpha^k\enVert{\mathbf{z}_{1} - \mathbf{z}_{0}} + \sum_{k=n}^{m-1}\sum_{j=0}^{k}\alpha^{k-j}\del{\varepsilon_{j+1}+\varepsilon_{j}}\\
			&\leq \frac{\alpha^n}{1-\alpha}\enVert{\mathbf{z}_{1} - \mathbf{z}_{0}} + \sum_{k=n}^{m-1}\sum_{j=0}^{k}\alpha^{k-j}\del{\varepsilon_{j+1}+\varepsilon_{j}}.
		\end{align*}
		Now we focus on the double sum. Using the assumption  $\varepsilon_k \leq \alpha^k$, we get
		\begin{equation*}
			\sum_{k=n}^{m-1}\sum_{j=0}^{k}\alpha^{k-j}\del{\varepsilon_{j+1}+\varepsilon_{j}}
			\leq \sum_{k=n}^{m-1}\del{k+1}\del{\alpha^{k+1}+\alpha^{k}}
			\leq 2\sum_{k=n}^{\infty}\del{k+1}\alpha^k.
		\end{equation*}	
		For a closed form we compute
		\begin{equation*}
			2\sum_{k=n}^{\infty}\del{k+1}\alpha^k=2\dod{}{\alpha}\sbr{\sum_{k=n}^\infty \alpha^{k+1}}
			=2\dod{}{\alpha}\sbr{\frac{\alpha^{n+1}}{1-\alpha}}
			=\frac{2\alpha^n\del{n-\alpha n+1}}{\del{1-\alpha}^2}.
		\end{equation*}
		We then deduce
		\begin{equation*}
			\enVert{\mathbf{z}_m-\mathbf{z}_n}\leq \alpha^n\del{\frac{2\del{n-\alpha n+1}}{\del{1-\alpha}^2}+\frac{\enVert{\mathbf{z}_1-\mathbf{z}_0}}{1-\alpha}}.
		\end{equation*}
		Therefore $\cbr{\mathbf{z}_k}$ is Cauchy and thus converges.
		The error estimate \eqref{eq:error estimate} is derived by taking $m \to \infty$ and replacing $n$ with $k$.
	\end{proof}
	
	Thanks to Lemma \ref{lem:approximate contraction}, we can now specify a fully formed algorithm to compute the equilibrium:
	\begin{itemize}
		\item Set ${\bf z}_0 = {\bf x}$.
		\item Given ${\bf z}_k$, use Newton's method to approximate $\tilde y_t^{{\bf z}_k}(x_j)$ for $j = 1,\ldots,N$, until we obtain a vector ${\bf z}_{k+1}$ satisfying
		\begin{equation}
			\enVert{{\bf z}_{k+1} - \tilde F({\bf z}_k)} \leq \alpha^k.
		\end{equation}
	\end{itemize}
	Then ${\bf z}_k$ will converge to the equilibrium, with error estimate given by Lemma \ref{lem:approximate contraction}.
	We remark that, since empirical measures are dense in the Wasserstein space and the equilibrium map $E(m)$ is continuous, this algorithm can also be used to approximate $E(m)$ for any $m \in \sr{P}_1(\bb{R}^d)$ within an arbitrary specified margin of error.
	
	\section{Asymptotic behavior of the equilibrium map} \label{sec:asymptotic}
	
	Recall that $E(m_0)$ is defined to be the equilibrium measure, i.e.~the final distribution, given an initial distribution $m_0$.
	In this section we study the dynamical system \eqref{eq:dynamical system}.
	We will focus in this paper on determining stability for the dynamical system \eqref{eq:dynamical system} for initial measures of the form $m_0=\sum_{j=1}^n \frac{1}{n}\delta_{x_j}$.
	Such empirical measures are also dense in the Wasserstein space $\sr{P}_1(\bb{R}^d)$.
	As discussed in Section \ref{sec:discrete}, for such initial measures we can replace $E(m)$ with $\tilde E({\bf x})$, so it is equivalent to study the classical dynamical system
	\begin{equation}\label{eq:Etilde}
		\begin{split}
			{\bf x}_0 &\in (\bb{R}^d)^n \quad \text{is given},\\
			{\bf x}_{k+1} &= \tilde E({\bf x}_k), \quad k = 0,1,2,\ldots
		\end{split}
	\end{equation}	
	We determine the asymptotic behavior by identifying
	\begin{itemize}
		\item the fixed points of $\tilde E$, and
		\item the spectrum of the derivative of $\tilde E$ at fixed points.
	\end{itemize}
	Our results in this section will hold for a particular case, namely when $d = 1$ and the cost function $G$ takes the form
	\begin{equation*}
		G(y,\mu)=\int \varphi(y-z)\dif \mu(z),
	\end{equation*}
	where $\varphi$ is a smooth function.
	However, we will start with some more general properties of $\tilde{E}$ before restricting to this special case.
	
	\subsection{General properties of $D\tilde E$}
	\begin{proposition}
		Recall the definition $y_t^\mu(x) =\operatorname{argmin} J_t(x,\cdot,\mu)$.
		Let $0 < t \leq t^*$, where $t^*$ is as in Theorem \ref{thm:well-posed}.
		Then $Dy_t^\mu(x)$ is a positive definite symmetric matrix for each $x$.
	\end{proposition}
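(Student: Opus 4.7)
The plan is to read the formula for $Dy_t^\mu(x)$ directly off the first-order optimality condition and then check symmetry and positive definiteness using Assumption \ref{Gassum}. Recall that $y_t^\mu(x)$ is characterized by the critical point equation \eqref{eq:critpt}, namely
\begin{equation*}
    y_t^\mu(x) + t\, D_y G(y_t^\mu(x),\mu) = x.
\end{equation*}
Differentiating both sides with respect to $x$ (justified by the implicit function theorem, exactly as was done in Step 3 of the proof of Theorem \ref{thm:well-posed}) yields
\begin{equation*}
    \bigl(I + t\, D_{yy}^2 G(y_t^\mu(x),\mu)\bigr)\, Dy_t^\mu(x) = I,
\end{equation*}
so that $Dy_t^\mu(x) = \bigl(I + t\, D_{yy}^2 G(y_t^\mu(x),\mu)\bigr)^{-1}$.

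From here the result is almost immediate. Symmetry is clear because $D_{yy}^2 G$ is a Hessian and hence symmetric, so $I + t\, D_{yy}^2 G$ is symmetric, and the inverse of an invertible symmetric matrix is symmetric. For positive definiteness, Assumption \ref{Gassum} gives $D_{yy}^2 G(y,\mu) \geq -\lambda_1(G)\, I$, which implies
\begin{equation*}
    I + t\, D_{yy}^2 G(y_t^\mu(x),\mu) \;\geq\; \bigl(1 - t\,\lambda_1(G)\bigr) I.
\end{equation*}
Since $t \leq t^* < \frac{1}{\lambda_1(G)+L_1} \leq \frac{1}{\lambda_1(G)}$, the scalar $1 - t\lambda_1(G)$ is strictly positive, so $I + t\, D_{yy}^2 G$ is symmetric positive definite. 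Its inverse is therefore also symmetric positive definite, which is exactly what we want.

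There is no real obstacle here; the only thing to be slightly careful about is ensuring that the invertibility used in the implicit differentiation is genuine, but this is precisely guaranteed by the lower bound $I + t\,D_{yy}^2 G \geq (1 - t\lambda_1(G))I > 0$ established above. In other words, the same spectral bound that makes $J_t(x,\cdot,\mu)$ strictly convex in Theorem \ref{thm:well-posed} is exactly what powers both the existence of $Dy_t^\mu(x)$ and its positive definiteness.
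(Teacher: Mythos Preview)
Your proof is correct and follows essentially the same route as the paper: differentiate the optimality condition \eqref{eq:critpt} to obtain $Dy_t^\mu(x) = (I + tD_{yy}^2 G)^{-1}$, then use Assumption \ref{Gassum} and the bound $t \leq t^* < 1/\lambda_1(G)$ to conclude that $I + tD_{yy}^2 G$ is symmetric positive definite, hence so is its inverse. If anything, you are slightly more explicit than the paper in justifying the applicability of the implicit function theorem and in spelling out why the inverse inherits symmetry and positive definiteness.
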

	\begin{proof}
		Let $y(x) = y_t^\mu(x)$.
		Then
		\begin{equation}
			\frac{y(x)-x}{t}+D_yg(y(x),\mu)=0.
		\end{equation}
		Taking the derivative with respect to $x$ yields
		\begin{align}
			\frac{Dy(x)-I}{t}+D^2_{yy}G(y(x),\mu)Dy(x)&=0\\
			(I+tD^2_{yy}G(y(x),\mu))Dy(x)&=I
		\end{align}
		Note: $D^2_{yy}G(y(x),\mu)$ is a symmetric matrix and $I+tD^2_{yy}G(y(x),\mu)$ is as well.
		Moreover, by Assumption \ref{Gassum} and the condition $t \leq t^* < \frac{1}{\lambda_1(G)}$, we see that $I+tD^2_{yy}G(y(x),\mu)$ is positive definite, hence its inverse $Dy(x)$ is positive definite as well. 
	\end{proof}
	Now we may use this result to determined the fixed points of the dynamical system.
	\begin{proposition}\label{prop:fixpt}
		Let 
		\begin{equation}
			\mu=\frac{1}{n}\sum_{i=1}^n \delta_{x_i}
		\end{equation}
		where $x_i\in \mathbb{R}$ for $1\leq i\leq n$. Suppose $y\#\mu=\mu$, and $y$ is strictly monotone increasing, then $y(x)=x$ for all $x\in \operatorname{supp}(\mu)$.
	\end{proposition}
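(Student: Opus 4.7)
The plan is to reduce the statement to an elementary fact about order-preserving bijections of a finite totally ordered set. Write the support of $\mu$ as the set of distinct atoms $\tilde x_1 < \tilde x_2 < \cdots < \tilde x_m$ (some of the $x_i$ may coincide; collect equal atoms into distinct points $\tilde x_j$ with combined weights $a_j = \#\{i : x_i = \tilde x_j\}/n > 0$). Then $\mu = \sum_{j=1}^m a_j \delta_{\tilde x_j}$, and the pushforward identity $y\#\mu = \mu$ reads
\begin{equation*}
\sum_{j=1}^m a_j \delta_{y(\tilde x_j)} = \sum_{j=1}^m a_j \delta_{\tilde x_j}.
\end{equation*}

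Next I would use strict monotonicity. Because $y$ is strictly increasing on $\bb{R}$, the values $y(\tilde x_1), y(\tilde x_2), \ldots, y(\tilde x_m)$ are pairwise distinct and arranged in strictly increasing order. In particular the left-hand side above is a sum of point masses at \emph{distinct} locations, and comparing it to the right-hand side (also a sum over distinct locations) forces
\begin{equation*}
\{y(\tilde x_1), \ldots, y(\tilde x_m)\} = \{\tilde x_1, \ldots, \tilde x_m\}
\end{equation*}
as sets. Hence there is a permutation $\sigma$ of $\{1,\ldots,m\}$ with $y(\tilde x_j) = \tilde x_{\sigma(j)}$. Since $j \mapsto y(\tilde x_j)$ and $j \mapsto \tilde x_j$ are both strictly increasing in $j$, the permutation $\sigma$ must preserve the ordering, i.e.~$\sigma$ is the identity. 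Therefore $y(\tilde x_j) = \tilde x_j$ for every $j$, which is exactly the claim $y(x) = x$ for all $x \in \operatorname{supp}(\mu)$.

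I do not anticipate any real obstacle. The only point requiring mild care is that the atoms of $\mu$ need not be given with equal weights if some of the $x_i$ in the definition happen to coincide; collecting repeated atoms into a reduced representation $\sum_j a_j \delta_{\tilde x_j}$ before invoking the matching argument avoids this issue cleanly. Strict monotonicity (as opposed to mere monotonicity) is what rules out the degenerate possibility that $y$ collapses several atoms to a single point, which would otherwise allow nontrivial fixed-point configurations of the pushforward; this is the one place where the hypothesis is used essentially.
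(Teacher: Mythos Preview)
Your proof is correct and follows essentially the same approach as the paper: both reduce to the elementary fact that a strictly increasing self-map of a finite totally ordered set onto itself must be the identity. Your treatment is slightly more careful in that you explicitly collect repeated atoms before invoking the matching argument, whereas the paper simply assumes without loss of generality that $x_1 < x_2 < \cdots < x_n$ and argues by pigeonhole from the smallest element upward.
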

	\begin{proof}
		Since $\mu$ is an empirical measure, $\operatorname{supp}(\mu)=\{x_i\}_{i=1}^n$. Also, since $y\#\mu=\mu$, 
		\begin{equation}
			\operatorname{supp}(y\#\mu)=\operatorname{supp}(\mu)=\{x_i\}_{i=1}^n
		\end{equation}
		Therefore for each $1\leq i\leq n$, $y(x_i)=x_j$ for some $1\leq j\leq n$. Without loss of generality we may assume $x_1<x_2<\dots<x_n$. Suppose $y(x_1)=x_i$ for some $i>1$. Then since $y$ is increasing, $y(x_j)>x_i$ for all $j>1$. Then by the pigeonhole principle, this implies $y$ is not injective.
		This contradicts that $y$ is strictly monotone increasing, so $y(x_1)=x_1$. This argument can be repeated to show $y(x_i)=x_i$ for each $1\leq i\leq n$. Therefore $y(x)=x$ for all $x\in \operatorname{supp}(\mu)$. 
	\end{proof}
	Now we know fixed points of $\tilde E$ correspond to measures $\mu = \frac{1}{n}\sum_{j=1}^n x_j$ such that $y_t^\mu=I$ on the support of $\mu$.
	Our next goal is to linearize $\tilde E$ around fixed points to check if the system is stable. We do this by computing $D\tilde E(\mathbf{x})$ and showing that the eigenvalues of $D\tilde E(\mathbf{x})$ are small if $\mathbf{x}$ is a fixed point.
	
	Let us introduce the notation
	\begin{equation*}
		\mu_{\mathbf{x}}=\frac{1}{n}\sum_{j=1}^n x_j \quad \forall \mathbf{x}=(x_1,\dots,x_n).
	\end{equation*}
	Then we define $g(y,\mathbf{x})=G(y,\mu_{\mathbf{x}})$.
	Now $\mu_{\mathbf{y}}$ is the equilibrium for an initial measure $\mu_{\mathbf{x}}$ if and only if for each $1\leq j\leq n$, $y_j$ is the optimal move for $x_j$ given $\mu_{\mathbf{y}}$ as a final measure, i.e.
	\begin{equation}\label{equilibrium}
		y_j+tD_yg(y_j(\mathbf{x}),\mathbf{y})=x_j.
	\end{equation}
	Hence we can define
	\begin{equation}
		\mathbf{y}(\mathbf{x}) = (y_1(\mathbf{x}),\ldots,y_n({\bf x})) = \tilde{E}({\bf x})
	\end{equation}
	implicitly through \eqref{equilibrium}.
	
	We want to compute $D\tilde E(\mathbf{x}) = D{\bf y}({\bf x})$. To do this we take the implicit partial derivative of \eqref{equilibrium} with respect to $x_k$ for $1\leq k\leq n$
	\begin{equation}
		\frac{\partial}{\partial x_k}[y_j(\mathbf{x})+tD_yg(y_j(\mathbf{x}),\mathbf{y}(\mathbf{x}))=x_j]
	\end{equation}
	to get
	\begin{equation}
		\frac{\partial y_j}{\partial x_k}+tD_{yy}^2g(y_j(\mathbf{x}),\mathbf{y}(\mathbf{x}))\frac{\partial y_j}{\partial x_k}+t\sum_{i=1}^n D_{yx_i}^2g(y_j(\mathbf{x}),\mathbf{y}(\mathbf{x}))\cdot \frac{\partial y_i}{\partial x_k}=\frac{\partial x_j}{\partial x_k}=\delta_{j,k}.
	\end{equation}
	We get a system of equations which is presented in matrix form as $A D\tilde E(\mathbf{x}) = I$ where $A=(A_{i,j})_{i,j=1}^n$,
	\begin{equation}
		A_{i,j}=\delta_{i,j}(1+tD^2_{yy}g(y_j(\mathbf{x}),\mathbf{y}(\mathbf{x})))+tD_{y x_i}^2g(y_j(\mathbf{x}),\mathbf{y}(\mathbf{x})).
	\end{equation}
	Next we will determine the spectrum of $A$ and thereby obtain the spectrum of $D\tilde E(\mathbf{x})$.
	
	\subsection{Spectral properties}
	For simplicity, we will look specifically at the case $$G(y,\mu)=\int \varphi(y-z)\dif \mu(z).$$
	In this case,
	\begin{equation}
		g(y,\mathbf{y})=G\del{y,\frac{1}{n}\sum_{k=1}^n\delta_{y_k}}=\frac{1}{n}\sum_{k=1}^n\varphi(y-y_k).
	\end{equation}
	Then we compute
	\begin{comment}
	\begin{equation}
		D^2_{yy}g(y,\mathbf{y})=\frac{1}{n}\sum_{k=1}^n\varphi''(y-y_k) 
	\end{equation}
	and at $y=y_j$
	\end{comment}
	\begin{equation}
		D^2_{yy}g(y_j,\mathbf{y})=\frac{1}{n}\sum_{k=1}^n\varphi''(y_j-y_k) 
	\end{equation}
	and 
	\begin{equation}
		D^2_{yx_i}g(y_j,\mathbf{y})=-\frac{1}{n}\varphi''(y_j-y_k).
	\end{equation}
	\begin{comment}
	So
	\begin{equation}
		\begin{split}
			A_{i,i}&=1+\frac{t}{n}\sum_{k=1}^n\varphi''(y_i-y_k)-\frac{t}{n}\varphi''(y_i-y_i)\\
			&=1+\frac{t}{n}\sum_{k=1,k\not=i}^n\varphi''(y_i-y_k)
		\end{split}
	\end{equation}
	And for $i\not=j$
	\begin{equation}
		A_{i,j}=-\frac{t}{n}\varphi''(y_j-y_i)
	\end{equation}
	\end{comment}
	We see $A$ can be written in the following form
	\begin{equation}
		\begin{split}
			A&=I+\frac{t}{n}\begin{pmatrix}
				\sum_{k=1,k\not=1}^n\varphi''(y_1-y_k)  & -\varphi''(y_2-y_1) & \cdots &-\varphi''(y_n-y_1)\\
				-\varphi''(y_1-y_2) & \sum_{k=1,k\not=2}^n\varphi''(y_2-y_k) & \cdots &-\varphi''(y_n-y_1)\\
				\vdots & \vdots & \ddots & \vdots\\
				-\varphi''(y_1-y_n) & -\varphi''(y_2-y_n) & \cdots & \sum_{k=1,k\not=n}^n\varphi''(y_n-y_k)
			\end{pmatrix}\\
			&=: I+\frac{t}{n}B
		\end{split}    
	\end{equation}
    \begin{theorem}\label{thm:fixpoint}
        Suppose $\varphi$ is even, has a minimum at $x=0$, is increasing on the interval $\del{0,r}$, and $\varphi(x)=0$ for $x\geq r$. Then $\mathbf{x}=(x_1,x_2,\dots,x_n)$ is a fixed point of $\tilde E$ if and only if for any $1\leq j,k \leq n$, $x_j=x_k$ or $|x_j-x_k|\geq r$.
    \end{theorem}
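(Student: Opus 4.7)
The plan is to reduce the fixed-point equation to a first-order condition on $\varphi'$ and then exploit its sign pattern.

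The observation is that $\tilde E(\mathbf{x}) = \mathbf{x}$ is, by \eqref{equilibrium}, equivalent to $D_y g(x_j,\mathbf{x}) = 0$ for every $j$; the strict convexity of $J_t$ furnished by Theorem \ref{thm:well-posed} ensures this critical-point condition identifies the unique minimizer. In our setting $D_y g(x_j,\mathbf{x}) = \frac{1}{n}\sum_{k=1}^n \varphi'(x_j - x_k)$, so the task becomes to characterize when
\begin{equation*}
\sum_{k=1}^n \varphi'(x_j - x_k) = 0 \qquad \text{for all } j = 1, \ldots, n.
\end{equation*}
The hypotheses on $\varphi$ imply that $\varphi'$ is odd, continuous, vanishes at $0$ and outside $[-r,r]$, is nonnegative on $(0,r)$, and nonpositive on $(-r,0)$. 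Thus $\varphi'(x_j - x_k) = 0$ whenever $x_j = x_k$ or $|x_j - x_k| \geq r$, giving the $(\Leftarrow)$ direction immediately.

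For the $(\Rightarrow)$ direction, which I expect to be the main content, the plan is to peel off the support of $\mathbf{x}$ from the left. I would let $v_1 < v_2 < \cdots < v_p$ denote the distinct values among the $x_j$'s, with multiplicities $n_1,\ldots,n_p$, and prove by induction on $\ell$ that $v_i - v_\ell \geq r$ for every $i > \ell$. For $\ell = 1$, the identity at $v_1$ reads $\sum_i n_i \varphi'(v_1 - v_i) = 0$, and every summand has $v_1 - v_i \leq 0$ and is therefore nonpositive, so each must individually vanish; using that $\varphi'$ is strictly positive on $(0,r)$, this forces $v_i - v_1 \geq r$ for all $i > 1$. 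For the inductive step, terms with $i < \ell$ drop out of the sum at $v_\ell$ (by the inductive hypothesis $v_\ell - v_i \geq r$, so $\varphi'(v_\ell - v_i) = 0$), leaving $\sum_{i > \ell} n_i \varphi'(v_\ell - v_i) = 0$, and the same one-sided sign argument finishes.

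The one delicate point is the need for $\varphi'$ to be strictly positive throughout $(0,r)$ and not merely nonnegative, since the argument requires ``each summand is zero'' to imply ``each distance is at least $r$''. I would read the hypothesis ``$\varphi$ is increasing on $(0,r)$'' in the strict sense so that smoothness delivers this automatically. Beyond that, I do not foresee any serious obstacle: the proof is essentially a sign-bookkeeping argument, and the key conceptual point is that in any fixed configuration the leftmost cluster, and then inductively the next-leftmost, must be isolated from the rest by at least $r$.
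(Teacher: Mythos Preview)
Your proposal is correct and follows essentially the same approach as the paper: both reduce the fixed-point condition to $\sum_k \varphi'(x_j-x_k)=0$ for all $j$ and then peel off clusters from the left by a sign argument. Your organization via distinct values $v_1<\cdots<v_p$ with multiplicities is slightly cleaner than the paper's direct work with $x_1\le\cdots\le x_n$, and you bypass the paper's appeal to Proposition~\ref{prop:fixpt} by reading the reduction straight off \eqref{equilibrium}; you also make explicit the need for $\varphi'>0$ on $(0,r)$, which the paper leaves tacit.
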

    \begin{proof}
        Note: we may arrange $\mathbf{x}$ so that $x_1\leq x_2 \leq \dots \leq x_n$. Now if $\mathbf{x}$ is a fixed point of $\tilde E$, then for each $1\leq j\leq n$
        \begin{equation}
            y_j+\frac{t}{n}\sum_{k=1}^n\varphi'(y_j-y_k)=x_j,
        \end{equation}
        and by Proposition \ref{prop:fixpt} $y_j=x_j$, so for $1\leq j\leq n$
        \begin{equation}
           \sum_{k=1}^n\varphi'(x_j-x_k)=0.
        \end{equation}
        For $j=1$, $\sum_{k=1}^n\varphi'(x_k-x_1)=0$. Now if $x_1=x_k$ for any $k$, $\varphi'(x_k-x_1)=0$. Suppose $k_1+1$ is the first $k$ such that $x_k-x_1>0$, then 
        \begin{equation}
           \sum_{k=k_1+1}^n\varphi'(x_j-x_k)=0 
        \end{equation}
        and since $\varphi'(x_k-x_1)\geq 0$ for $k> k_1$, we need $|x_k-x_1|\geq r$ for $k>k_1$. So $x_1=x_k$ for $k\leq k_1$ and $|x_k-x_1|\geq r$ for $k> k_1$.

        Now suppose $x_j$ is such that $|x_j-x_k|\geq r$ for $k<j$, then
        \begin{equation}
            \sum_{k=1}^j \varphi'(x_k-x_j)=0.
        \end{equation}
        So we still need 
        \begin{equation}
            \sum_{k=j+1}^n\varphi'(x_k-x_j)=0.
        \end{equation}
        If $x_j=x_k$ for any $k>j$, $\varphi'(x_k-x_j)=0$, and if $k_j+1$ is the first $k$ such that $x_k-x_j>0$,
        \begin{equation}
            \sum_{k=k_j+1}^n\varphi'(x_k-x_j)=0.
        \end{equation}
        Therefore $\varphi'(x_k-x_j)=0$ for $k> k_j$, so $|x_k-x_j|\geq r$ for $k>k_i$. Thus our hypothesis holds.

        Now suppose for any $1\leq j, k\leq n$ either $x_k=x_j$ or $|x_k-x_j|>r$. Then if $x_j=x_k$, $\varphi'(x_j-x_k)=\varphi'(0)=0$, and if $|x_k-x_j|\geq r$, $\varphi'(x_k-x_j)=0$. So 
        \begin{equation}
            \sum_{k=1}^n\varphi'(x_k-x_j)=0
        \end{equation}
        for all $1\leq j\leq n$. Thus $\mathbf{x}$ is a fixed point for $\tilde E$.
    \end{proof}
	Let ${\bf x}$ be a fixed point of $\tilde{E}$.
    Since we have this clustering behavior from Theorem \ref{thm:fixpoint}, it will be helpful to introduce the following notation:
    \begin{align*}
        x_{k_1}&=x_k, &1=k_0\leq &k\leq k_1\\
        x_{k_2}&=x_k, &k_1< &k\leq k_2\\
        & \vdots & &\vdots\\
        x_{k_{i}}&=x_k, &k_{i-1}< &k\leq k_i=n
    \end{align*}
	where, without loss of generality, we have labeled the components of ${\bf x}$ in increasing order.
	By Theorem \ref{thm:fixpoint}, we have $x_{k_{m}}-x_{k_{m-1}}\geq r$ for $2\leq m\leq i$.
	\begin{definition} \label{def:spread}
		We define ${\bf x}$ to be \emph{sufficiently spread out} if we have the strict inequality $x_{k_{m}}-x_{k_{m-1}}>r$ for $2\leq m\leq i$.
	\end{definition}
    
    Recall, we are evaluating $D\tilde E(\mathbf{x})$ for $\mathbf{x}$ a fixed point of $\tilde E$. So it suffices to look at the spectrum of $B$ evaluated at fixed points. According to Theorem \ref{thm:fixpoint}, if $x_k\not=x_j$, then $|x_j-x_k|>r$ and $\varphi''(x_j-x_k)=0$. But if $x_k=x_j$, then $\varphi''(x_j-x_k)=\varphi''(0)=1$.
    \begin{equation}
        B=\begin{pmatrix}
				\sum_{k=1,k\not=1}^n\varphi''(x_1-x_k)  & -\varphi''(x_2-x_1) & \cdots &-\varphi''(x_n-x_1)\\
				-\varphi''(x_1-x_2) & \sum_{k=1,k\not=2}^n\varphi''(x_2-x_k) & \cdots &-\varphi''(x_n-x_1)\\
				\vdots & \vdots & \ddots & \vdots\\
				-\varphi''(x_1-x_n) & -\varphi''(x_2-x_n) & \cdots & \sum_{k=1,k\not=n}^n\varphi''(x_n-x_k)
			\end{pmatrix}
    \end{equation}
    We see $B$ has a block diagonal form:
    \begin{equation}
        B=\begin{pmatrix}
            B_1 & 0 & \cdots & 0\\
            0 & B_2 & \cdots & 0\\
            \vdots & \vdots &\ddots & \vdots\\
            0 & 0 & \cdots & B_i
        \end{pmatrix}
    \end{equation}
    where for $1\leq m\leq i$, $B_m$ is the $(k_m-k_{m-1})\times(k_m-k_{m-1})$ matrix given by
    \begin{equation}
        B_m=\begin{pmatrix}
            (k_m-k_{m-1}-1) & -1 & \cdots & -1\\
            -1 & (k_m-k_{m-1}-1) & \cdots & -1\\
            \vdots & \vdots &\ddots & \vdots\\
            -1 & -1 & \cdots & (k_m-k_{m-1}-1)
        \end{pmatrix}.
    \end{equation}
	\begin{lemma}\label{lem:Bpos}
	    Let $C_n$ be an $n\times n$ matrix with the following form:
     \begin{equation}
         C_n=\begin{pmatrix}
            n-1 & -1 & \cdots & -1\\
            -1 & n-1 & \cdots & -1\\
            \vdots & \vdots &\ddots & \vdots\\
            -1 & -1 & \cdots & n-1
        \end{pmatrix}.
     \end{equation}
     Then if $n\geq 2$, $C_n$ is a positive matrix.
	\end{lemma}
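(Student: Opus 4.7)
My plan is to recognize $C_n$ as a rank-one perturbation of a multiple of the identity. Specifically, let $\mathbf{1} \in \bb{R}^n$ denote the all-ones column vector; then $C_n = nI - \mathbf{1}\mathbf{1}^T$, since this choice gives diagonal entries $n-1$ and off-diagonal entries $-1$, matching the stated form.

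From this decomposition the spectrum can be read off immediately. The rank-one matrix $\mathbf{1}\mathbf{1}^T$ has eigenvalue $n$ with eigenvector $\mathbf{1}$, and eigenvalue $0$ on the orthogonal complement $\mathbf{1}^\perp$ (with multiplicity $n-1$). Hence $C_n$ has eigenvalue $0$ along $\mathbf{1}$ and eigenvalue $n$ on $\mathbf{1}^\perp$. All eigenvalues are non-negative, so $C_n$ is positive semi-definite; in particular it is strictly positive on $\mathbf{1}^\perp$ as soon as $n \geq 2$.

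Equivalently, a one-line direct proof goes as follows: for any $v \in \bb{R}^n$,
\[
v^T C_n v \;=\; n\sum_{k=1}^n v_k^2 - \del{\sum_{k=1}^n v_k}^2 \;\geq\; 0
\]
by the Cauchy--Schwarz inequality applied to $\mathbf{1}$ and $v$. I do not foresee any real obstacle here. The only thing worth flagging is that ``positive matrix'' in the statement must be understood in the positive semi-definite sense (consistent with the $A \geq B$ notation fixed in the introduction), since $\mathbf{1}$ always lies in the kernel of $C_n$, so $C_n$ is never strictly positive definite. This is exactly the information needed downstream: combined with $A = I + \frac{t}{n}B$ and the block-diagonal structure of $B$, positive semi-definiteness of each $B_m = C_{k_m - k_{m-1}}$ forces the eigenvalues of $A$ to lie in $[1, \infty)$, and hence the eigenvalues of $D\tilde E(\mathbf{x}) = A^{-1}$ to lie in $(0, 1]$.
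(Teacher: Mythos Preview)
Your proof is correct, and in fact cleaner than the paper's. The paper argues by induction on $n$: the base case $n=2$ gives $\mathbf{v}^T C_2 \mathbf{v} = (v_1-v_2)^2$, and the inductive step rests on the decomposition
\[
\mathbf{v}^T C_n \mathbf{v} = \begin{pmatrix} v_1 & \cdots & v_{n-1} \end{pmatrix} C_{n-1} \begin{pmatrix} v_1 \\ \vdots \\ v_{n-1} \end{pmatrix} + \sum_{k=1}^{n-1}(v_k - v_n)^2,
\]
which is non-negative by the inductive hypothesis. Your rank-one identification $C_n = nI - \mathbf{1}\mathbf{1}^T$ bypasses the induction entirely and, as you note, yields the full spectrum at once: eigenvalue $0$ on $\operatorname{span}\{\mathbf{1}\}$ and eigenvalue $n$ on $\mathbf{1}^\perp$. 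This actually subsumes the paper's subsequent Corollary (that the kernel of $C_n$ is one-dimensional and spanned by $\mathbf{1}$), which the paper proves separately by revisiting the inductive decomposition. Your reading of ``positive matrix'' as positive semi-definite is also the correct one in context, and your remark about the downstream use for $D\tilde E(\mathbf{x}) = A^{-1}$ is exactly how the paper proceeds.
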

 \begin{proof}
    We proceed by induction on $n$.\\
		Base case: $n=2$
		\begin{equation}
			C_2=\begin{pmatrix}
				1 & -1\\
				-1 & 1
			\end{pmatrix}
		\end{equation}
		Let $\mathbf{v}=\begin{pmatrix}
			v_1\\
			v_2
		\end{pmatrix}$ be an arbitrary real vector. Then 
		\begin{equation}
			\mathbf{v}^T  C_2 \mathbf{v}=(v_1-v_2)^2\geq0.
		\end{equation}
		Now supposing the hypothesis holds for $C_{n-1}$ for $n>2$, we want to prove it holds for $C_n$. Let $\mathbf{v}=\begin{pmatrix}
			v_1\\
			\vdots\\
			v_n
		\end{pmatrix}$. Now,
		\begin{equation}\label{eq:decomp}
			\begin{split}
				\mathbf{v}^TC_n\mathbf{v}&=\begin{pmatrix}
					v_1 &\cdots & v_{n-1}
				\end{pmatrix} C_{n-1} \begin{pmatrix}
					v_1\\
					\vdots \\
					v_{n-1}
				\end{pmatrix}-2v_n\sum_{k=1}^{n-1}v_k+\sum_{k=1}^{n-1}v_k^2+\sum_{k=1}^{n-1}v_n^2\\
				&=\begin{pmatrix}
					v_1 &\cdots & v_{n-1}
				\end{pmatrix} C_{n-1} \begin{pmatrix}
					v_1\\
					\vdots \\
					v_{n-1}
				\end{pmatrix}+\sum_{k=1}^{n-1}(v_k-v_n)^2  
			\end{split} 
		\end{equation}
		By our inductive hypothesis, 
		\begin{equation*}
			\begin{pmatrix}
				v_1 &\cdots & v_{n-1}
			\end{pmatrix} C_{n-1} \begin{pmatrix}
				v_1\\
				\vdots \\
				v_{n-1}
			\end{pmatrix}\geq 0.
		\end{equation*}
		Additionally, $(v_k-v_n)^2\geq 0$ for all $1\leq k\leq n-1$, so we can conclude $C_n$ is a positive matrix.
 \end{proof}
 
	\begin{corollary}\label{cor:eig0}
		If $n\geq 2$, the eigenspace for $C_n$ for the eigenvalue $\lambda=0$ is one dimensional and spanned by $\begin{pmatrix}
			1\\
			1\\
			\vdots\\
			1
		\end{pmatrix}$.
	\end{corollary}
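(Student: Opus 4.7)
The plan is to prove the two claims in the corollary separately: first that $(1,1,\ldots,1)^T \in \ker C_n$, and second that any vector in $\ker C_n$ is a scalar multiple of this vector.

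The first claim is an immediate row-sum inspection: each row of $C_n$ has one diagonal entry equal to $n-1$ and $n-1$ off-diagonal entries equal to $-1$, so every row sums to $(n-1)+(n-1)(-1) = 0$. Hence $C_n(1,\ldots,1)^T = 0$, and $\dim \ker C_n \geq 1$.

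For the reverse inclusion, I would recycle the quadratic-form identity already established in the inductive step of Lemma \ref{lem:Bpos}, namely equation \eqref{eq:decomp}, which gives
\[
    v^T C_n v = \begin{pmatrix} v_1 & \cdots & v_{n-1} \end{pmatrix} C_{n-1} \begin{pmatrix} v_1 \\ \vdots \\ v_{n-1} \end{pmatrix} + \sum_{k=1}^{n-1}(v_k - v_n)^2.
\]
Suppose $v \in \ker C_n$, so that in particular $v^T C_n v = 0$. Both summands on the right-hand side are nonnegative (the first by Lemma \ref{lem:Bpos} applied to $C_{n-1}$ when $n \geq 3$, and trivially when $n=2$), so both must vanish. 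Vanishing of the sum-of-squares term alone forces $v_1 = v_2 = \cdots = v_{n-1} = v_n$, which places $v$ in the span of $(1,1,\ldots,1)^T$.

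There is essentially no serious obstacle here, since all the analytic work was done in Lemma \ref{lem:Bpos}. The one fine point worth flagging is that positive semi-definiteness on its own does not pin down the kernel; what does the job is the explicit sum-of-squares structure in \eqref{eq:decomp}, which gives the dimension-one conclusion in a single line once we have $v^T C_n v = 0$.
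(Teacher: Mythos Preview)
Your proof is correct and follows essentially the same approach as the paper: verify $(1,\ldots,1)^T\in\ker C_n$ by row sums, then use the decomposition \eqref{eq:decomp} together with Lemma~\ref{lem:Bpos} to force all coordinates of a null vector to agree. The only cosmetic difference is that the paper phrases the second step contrapositively (if some $v_k\neq v_n$ then $v^T C_n v>0$), whereas you argue directly from $v^T C_n v=0$.
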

	\begin{proof} 
		\begin{equation}
			C_n\begin{pmatrix}
				1\\
				1\\
				\vdots\\
				1
			\end{pmatrix}=\begin{pmatrix}
            n-1 & -1 & \cdots & -1\\
            -1 & n-1 & \cdots & -1\\
            \vdots & \vdots &\ddots & \vdots\\
            -1 & -1 & \cdots & n-1
        \end{pmatrix}\begin{pmatrix}
				1\\
				1\\
				\vdots\\
				1
			\end{pmatrix}=\mathbf{0}
		\end{equation} 
		So our vector is an eigenvector for the eigenvalue $\lambda=0$. Now we'd like to show that it spans the eigenspace. Without loss of generality, suppose $v_n\not=v_k$ for some $1\leq k\leq n-1$ where $\mathbf{v}=\begin{pmatrix}
			v_1\\
			v_2\\
			\vdots\\
			v_n
		\end{pmatrix}$. Then from \eqref{eq:decomp} we have 
		\begin{equation}
			\mathbf{v}^TC_n\mathbf{v}=\begin{pmatrix}
				v_1 &\cdots & v_{n-1}
			\end{pmatrix} C_{n-1} \begin{pmatrix}
				v_1\\
				\vdots \\
				v_{n-1}
			\end{pmatrix}+\sum_{k=1}^{n-1}(v_k-v_n)^2 
		\end{equation}
		As before, the first piece is nonnegative and since $v_n\not=v_k$ for some $k$, the sum is strictly greater than $0$ so any $\mathbf{v}$ not in the span of $\begin{pmatrix}
			1\\
			1\\
			\vdots\\
			1
		\end{pmatrix}$ is not an eigenvector for $\lambda=0$.
	\end{proof}

	So eigenvalues for $B$ are greater than or equal to $0$. Recalling $D\tilde E(\mathbf{x})=A^{-1}$ and $A=I+\frac{t}{n}B$, eigenvalues for $D\tilde E(\mathbf{x})$ are $\lambda\leq 1$. Now for stability analysis, it can be problematic for $D\tilde E (\mathbf{x})$ to have an eigenvalue of $1$, but when the eigenspace is spanned by $\begin{pmatrix}
		1\\
		1\\
		\vdots\\
		1
	\end{pmatrix}$, we can decompose our empirical measures in the system using the following proposition.
	
	\begin{proposition}\label{prop:translation}
		If $g(y_j+\alpha,\mathbf{y}+\alpha(1,\dots,1))=g(y_j,\mathbf{y})$ for any $\alpha\in\bb{R}$ and $y\in\bb{R}^n$, then \mbox{$\tilde E(\mathbf{x}+\alpha(1,\dots,1))=\tilde E(\mathbf{x})+\alpha(1,\dots,1)$}.
	\end{proposition}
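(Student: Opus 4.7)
The plan is to exploit the defining equilibrium relation \eqref{equilibrium} together with uniqueness of the fixed point, which was established in Theorem \ref{thm:well-posed}. Writing $\mathbf{y}=\tilde{E}(\mathbf{x})$, each component satisfies
\begin{equation*}
y_j + t D_y g(y_j,\mathbf{y}) = x_j, \qquad 1 \leq j \leq n,
\end{equation*}
and conversely any $\mathbf{y}$ satisfying this system for every $j$ must equal $\tilde{E}(\mathbf{x})$. So the strategy is simply to verify that the candidate $\tilde{E}(\mathbf{x})+\alpha(1,\dots,1)$ satisfies this same system at the shifted input $\mathbf{x}+\alpha(1,\dots,1)$, and then invoke uniqueness.

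First I would translate the hypothesis $g(y_j+\alpha,\mathbf{y}+\alpha(1,\dots,1))=g(y_j,\mathbf{y})$ into a statement about $D_y g$. Viewing both sides as functions of $y_j$ with $\mathbf{y}$ and $\alpha$ held fixed, differentiating in $y_j$ immediately yields
\begin{equation*}
D_y g\bigl(y_j+\alpha,\mathbf{y}+\alpha(1,\dots,1)\bigr) = D_y g(y_j,\mathbf{y}).
\end{equation*}
This is the only analytic input needed; everything else is bookkeeping.

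Next I would plug the candidate into the equilibrium equation at the shifted input. Setting $\mathbf{y}'=\mathbf{y}+\alpha(1,\dots,1)$ and $\mathbf{x}'=\mathbf{x}+\alpha(1,\dots,1)$, the $j$th equation becomes
\begin{equation*}
(y_j+\alpha) + t D_y g(y_j+\alpha,\mathbf{y}') = (y_j+\alpha) + t D_y g(y_j,\mathbf{y}) = x_j + \alpha = x'_j,
\end{equation*}
where the first equality uses the derivative identity above and the second uses the original equilibrium relation for $\mathbf{x}$. Thus $\mathbf{y}'$ solves the fixed-point system corresponding to $\mathbf{x}'$. By Theorem \ref{thm:well-posed} the solution is unique, so $\tilde{E}(\mathbf{x}') = \mathbf{y}' = \tilde{E}(\mathbf{x}) + \alpha(1,\dots,1)$, which is exactly what we wanted.

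There is no real obstacle here; the proposition is essentially a symmetry statement saying that a translation-invariant cost produces a translation-equivariant equilibrium map, and the only thing one has to be careful about is treating the two occurrences of $\alpha$ (in the first and in the vector argument) consistently when differentiating. The uniqueness part of Theorem \ref{thm:well-posed} does the rest.
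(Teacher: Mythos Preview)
Your proof is correct and follows essentially the same approach as the paper: both arguments use the translation invariance of $D_y g$ (obtained by differentiating the hypothesis) together with the defining equilibrium relation \eqref{equilibrium} and uniqueness of the fixed point. The only cosmetic difference is direction---the paper starts from $\mathbf{z}=\tilde E(\mathbf{x}+\alpha(1,\dots,1))$ and shows $\mathbf{z}-\alpha(1,\dots,1)=\tilde E(\mathbf{x})$, whereas you start from $\mathbf{y}=\tilde E(\mathbf{x})$ and show $\mathbf{y}+\alpha(1,\dots,1)=\tilde E(\mathbf{x}+\alpha(1,\dots,1))$; your version is slightly more explicit about the differentiation step and the appeal to uniqueness.
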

	\begin{proof}
		Recall: $\tilde E(\mathbf{x})=\mathbf{y}$ if and only if for all $1\leq j\leq n$
		\begin{equation}
			y_j(\mathbf{x})+tD_yg(y_j(\mathbf{x}),\mathbf{y}(\mathbf{x}))=x_j.   
		\end{equation}
		So if $\tilde E(\mathbf{x}+\alpha(1,\dots,1))=\mathbf{z}$ then
		\begin{equation}
			z_j+tD_y g(z_j,\mathbf{z})=x_j+\alpha   
		\end{equation}
		Equivalently,
		\begin{equation}
			z_j-\alpha+tD_y g(z_j,\mathbf{z})=x_j
		\end{equation}
		Now since $g(y+\alpha,\mathbf{y}+\alpha(1, \dots, 1))=g(y,\mathbf{y})$
		\begin{equation}
			z_j-\alpha+tD_y g(z_j-\alpha,\mathbf{z}-\alpha(1,\dots,1))=x_j 
		\end{equation}
		This implies $\tilde E(\mathbf{x})=\mathbf{z}-\alpha(1,\dots,1)$, so $\tilde E(\mathbf{x}+\alpha(1,\dots,1))=\tilde E(\mathbf{x})+\alpha(1,\dots,1)$ as desired.
	\end{proof}
	\begin{remark}
		$g(y,\mathbf{y})=\frac{1}{n}\sum_{k=1}^n\varphi(y-y_k)$ meets the criteria of Proposition \ref{prop:translation}.
	\end{remark}
	Let
	\begin{equation}
		W=[\text{span}\{(1,\dots,1)\}]^\perp 
	\end{equation}
	Now, let $\mathbf{x}$ be a point in our system. Then $\mathbf{x}$ has the following orthogonal decomposition
	\begin{equation}
		\mathbf{x}=\mathbf{w}+\alpha (1,\dots, 1)
	\end{equation}
	where $\alpha\in \bb{R}$ and $\mathbf{w}\in W$. This allows us to restrict the domain of the dynamical system to $W$. By Proposition \ref{prop:translation}, we have that $\tilde E:W \to W$.
	\begin{theorem}\cite[Theorem 1.3.7]{stuart1998dynamical}\label{thm:stability}
		Suppose 
		\begin{equation}
			\begin{split}
				\mathbf{y}_{k+1}&=F(\mathbf{y}_k)\\
				\mathbf{y}_0&\in \bb{R}^n
			\end{split}
		\end{equation}
		where $F:D\to \bb{R}^n$ and $D\subset \bb{R}^n$. Let $F\in \s{C}^2(\bb{R}^n,\bb{R}^n)$, then an equilibrium point $\mathbf{y}^*$ of the system is asymptotically stable if the eigenvalues of $DF(\mathbf{y}^*)$ lie strictly inside the unit circle.
	\end{theorem}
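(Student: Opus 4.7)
The plan is to prove this classical discrete-time stability result by linearizing $F$ around $\mathbf{y}^*$ and reducing to the contractive linear case. Without loss of generality translate so that $\mathbf{y}^* = 0$ and write $A := DF(\mathbf{y}^*)$. The crucial linear-algebra input is the standard fact that if every eigenvalue of $A$ lies strictly inside the unit disk, then for any $\varepsilon > 0$ there exists a norm $\enVert{\cdot}_*$ on $\bb{R}^n$, equivalent to the Euclidean norm, whose induced operator norm satisfies $\enVert{A}_* \leq \rho(A) + \varepsilon$, where $\rho(A)$ denotes the spectral radius. One constructs $\enVert{\cdot}_*$ by conjugating $A$ to its (real) Jordan form and then weighting the coordinates by a small factor $\eta$, which rescales the nilpotent super-diagonal entries by $\eta$ while leaving the eigenvalues on the diagonal unchanged. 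Choosing $\varepsilon$ small enough that $\rho(A) + \varepsilon < 1$ makes $A$ a strict linear contraction in $\enVert{\cdot}_*$.

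Next I would use the $\s{C}^2$ hypothesis together with Taylor's theorem to write $F(\mathbf{y}) - \mathbf{y}^* = A(\mathbf{y} - \mathbf{y}^*) + R(\mathbf{y})$, where the remainder obeys $\enVert{R(\mathbf{y})}_* \leq C \enVert{\mathbf{y} - \mathbf{y}^*}_*^2$ on a bounded neighborhood of $\mathbf{y}^*$. Combining with the operator-norm bound yields
\begin{equation*}
    \enVert{F(\mathbf{y}) - \mathbf{y}^*}_* \leq \del{\rho(A) + \varepsilon}\enVert{\mathbf{y} - \mathbf{y}^*}_* + C \enVert{\mathbf{y} - \mathbf{y}^*}_*^2.
\end{equation*}
Restricting to a closed ball $\overline{B}_\delta(\mathbf{y}^*)$ with $\delta$ so small that $C\delta < \frac{1}{2}\del{1 - \rho(A) - \varepsilon}$ gives, on this ball, the one-step contraction $\enVert{F(\mathbf{y}) - \mathbf{y}^*}_* \leq \alpha \enVert{\mathbf{y} - \mathbf{y}^*}_*$ with $\alpha := \rho(A) + \varepsilon + C\delta < 1$. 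In particular $F$ maps the ball into itself, so iteration starting from any $\mathbf{y}_0$ in the ball stays in the ball and $\enVert{\mathbf{y}_k - \mathbf{y}^*}_* \leq \alpha^k \enVert{\mathbf{y}_0 - \mathbf{y}^*}_*$, which delivers both Lyapunov stability (the orbit remains close) and local attractivity (exponential convergence to $\mathbf{y}^*$), and hence asymptotic stability, first in $\enVert{\cdot}_*$ and then, by norm equivalence, in any norm.

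The step I expect to be the main subtlety is the construction of the adapted norm $\enVert{\cdot}_*$, since the naive Euclidean operator norm of $A$ can exceed $1$ even when $\rho(A) < 1$ (witness nontrivial Jordan blocks), so a direct contraction argument in the Euclidean norm fails. Once the adapted norm is available, the remainder of the proof reduces to a routine Taylor expansion and a geometric iteration. An alternative route would be to argue from the bound $\enVert{A^k} \leq C_\varepsilon \del{\rho(A) + \varepsilon}^k$ (a consequence of Gelfand's formula) and then absorb the nonlinear remainder by a discrete Gronwall-style induction; however, this only produces an eventual contraction of iterates rather than a clean one-step estimate, so I would favor the adapted-norm route.
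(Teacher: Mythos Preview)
Your argument is correct and is essentially the standard proof of this classical linearization principle. Note, however, that the paper does not actually prove this theorem: it is quoted verbatim as \cite[Theorem 1.3.7]{stuart1998dynamical} and used as a black box in the proof of Theorem~\ref{thm:stability2}. So there is no ``paper's own proof'' to compare against here; your write-up simply supplies the argument that the cited reference contains. The adapted-norm construction you describe (conjugate to Jordan form, rescale the off-diagonal nilpotent part) is exactly the device used in Stuart--Humphries and most dynamical-systems texts, and your subsequent Taylor-remainder contraction is the standard way to close the argument.
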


	\begin{theorem}\label{thm:stability2}
		Suppose
		\begin{equation}\label{eq:system}
			\begin{split}
				\mathbf{x}_{k+1}&=\tilde E(\mathbf{x}_{k})\\
				\mathbf{x}_0&\in W
			\end{split}
		\end{equation}
		where $\tilde E: W\to W$ is as defined in \eqref{eq:Etilde}. Then an equilibrium point $\mathbf{x}^*\in\operatorname{span}\cbr{(1,\dots,1)}$ of the system is asymptotically stable. 
	\end{theorem}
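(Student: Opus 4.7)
The plan is to apply the standard linearization criterion (Theorem \ref{thm:stability}) to the restricted map $\tilde E : W \to W$. Since $W \cap \operatorname{span}\{(1,\ldots,1)\} = \{0\}$, the fixed point under consideration is $\mathbf{x}^* = 0$, which by Proposition \ref{prop:translation} represents (modulo translations along the diagonal) the single-cluster configuration in which all players share a common location. In particular, the cluster decomposition introduced before Lemma \ref{lem:Bpos} has $i = 1$, so the matrix $B$ appearing in $A = I + \tfrac{t}{n}B$ reduces to the single block $B = C_n$.

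The first step is to verify the regularity hypothesis of Theorem \ref{thm:stability}. Applying the implicit function theorem to the defining equation \eqref{equilibrium}, together with Assumption \ref{Gassum} and the smoothness of $\varphi$ in the special form $G(y,\mu) = \int \varphi(y-z)\dif\mu(z)$, gives that $\tilde E$ is $\mathcal{C}^2$ in a neighborhood of $\mathbf{x}^*$. Because $B = C_n$ is symmetric with kernel equal to $W^\perp$ (Corollary \ref{cor:eig0}), both $B$ and $A$ leave $W$ invariant, so the restriction $D\tilde E(\mathbf{x}^*)|_W = (A|_W)^{-1}$ is well defined and is indeed the derivative of the restricted dynamical system at the fixed point.

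The heart of the argument is the spectral estimate. By Lemma \ref{lem:Bpos}, $C_n$ is positive semi-definite, and by Corollary \ref{cor:eig0} its kernel is exactly the one-dimensional subspace $W^\perp$. Therefore $B|_W$ is strictly positive definite, $A|_W$ has spectrum contained in $(1,\infty)$, and $D\tilde E(\mathbf{x}^*)|_W$ has spectrum in $(0,1)$, which lies strictly inside the unit circle. Applying Theorem \ref{thm:stability} to the restricted system then yields asymptotic stability. The main subtlety throughout is precisely that $D\tilde E(\mathbf{x}^*)$ on the full ambient space $\mathbb{R}^n$ always carries $1$ as an eigenvalue corresponding to the translation direction $(1,\ldots,1)$, so Theorem \ref{thm:stability} cannot be applied naively to the unrestricted system; the orthogonal decomposition along $\operatorname{span}\{(1,\ldots,1)\}$ together with the translation equivariance of Proposition \ref{prop:translation} is exactly what is needed to quotient out this neutral mode before invoking the linearization principle.
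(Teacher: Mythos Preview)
Your proposal is correct and follows essentially the same route as the paper: verify $\mathcal{C}^2$ regularity via the implicit function theorem, use Lemma~\ref{lem:Bpos} and Corollary~\ref{cor:eig0} to see that $B$ is positive semi-definite with one-dimensional kernel $W^\perp$, and conclude that the restriction $D\tilde E(\mathbf{x}^*)|_W$ has spectrum strictly inside the unit circle so that Theorem~\ref{thm:stability} applies. You are in fact slightly more explicit than the paper in noting that $\mathbf{x}^* \in W \cap \operatorname{span}\{(1,\ldots,1)\}$ forces $\mathbf{x}^* = 0$ and hence $B = C_n$ is a single block, and in justifying the $W$-invariance directly from the symmetry of $B$ rather than only via the translation equivariance of Proposition~\ref{prop:translation}.
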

	\begin{proof}
		We just need to show \eqref{eq:system} satisfies the assumptions of Theorem \ref{thm:stability}. $\tilde E\in \s{C}^2(\bb{R}^n,\bb{R}^n)$ by a routine application of the Implicit Function Theorem, and therefore by restricting to $W$ we have $\tilde E\in \s{C}^2(W,W)$, where $W$ is isomorphic to $\bb R^{n-1}$. By \ref{lem:Bpos}, $D\tilde E(\mathbf{x}^*)$ has eigenvalues $\lambda\leq1$. However, each $\bf x\in\bb R^n$ has an orthogonal decomposition of the form 
  \begin{equation}
     \mathbf{x}= \mathbf{w} +\alpha (1,\dots, 1) 
  \end{equation} and by Proposition \ref{prop:translation}
  \begin{equation}
      \tilde E(\mathbf{x})=\tilde E(\mathbf{w})+\alpha(1,\dots,1)
  \end{equation}
  Therefore $D\tilde E(\mathbf{x}^*)$ can be decomposed into one part that acts on $W$ and the other part that acts on $W^\perp$, where $W^\perp$ is the eigenspace corresponding to eigenvalue $1$. So $D\tilde E(\mathbf{x}^*)$ restricted to $W$ has eigenvalues $\lambda<1$, thus an equilibrium point $\mathbf{x}^*$ is asymptotically stable.
	\end{proof}

 \begin{corollary}\label{cor:stability}
   		Suppose
		\begin{equation}
			\begin{split}
				\mathbf{x}_{k+1}&=\tilde E(\mathbf{x}_{k})\\
				\mathbf{x}_0&\in \bb{R}^n
			\end{split}
		\end{equation}
		where $\tilde E: \bb{R}^n\to \bb{R}^n$ is as defined in \eqref{eq:Etilde}. Then an equilibrium point $\mathbf{x}^*\in\operatorname{span}\cbr{(1,\dots,1)}$ of the system is asymptotically stable.   
 \end{corollary}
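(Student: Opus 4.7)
The plan is to reduce Corollary \ref{cor:stability} to Theorem \ref{thm:stability2} by exploiting the translation equivariance of $\tilde E$ along the direction $(1,\ldots,1)$. The main obstacle, which I want to flag up front, is that strict asymptotic stability of $\mathbf{x}^*$ as a point of $\mathbb{R}^n$ actually fails: by Corollary \ref{cor:eig0} the matrix $B$ has $(1,\ldots,1)$ in its kernel, so $D\tilde E(\mathbf{x}^*)$ has eigenvalue $1$ along $\operatorname{span}\{(1,\ldots,1)\}$, blocking a direct application of Theorem \ref{thm:stability}; any perturbation of $\mathbf{x}^*$ that shifts the centroid persists undamped forever. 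The corollary must therefore be read in the quotient sense natural for a translation-equivariant dynamical system, and the proof amounts to making this precise.

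First I would verify that $\tilde E$ maps $W$ into itself. Summing the defining relation $y_j + t D_y g(y_j,\mathbf{y}) = x_j$ over $j$ and using that $\varphi$ is even (so $\varphi'$ is odd and the double sum $\sum_{j,k}\varphi'(y_j - y_k)$ vanishes by antisymmetry of the summand), one sees that the centroid is preserved. Combined with Proposition \ref{prop:translation}, this produces an $\tilde E$-invariant orthogonal splitting $\mathbb{R}^n = W \oplus \operatorname{span}\{(1,\ldots,1)\}$ on which $\tilde E$ restricts to $\tilde E|_W$ and to a pure shift-by-input, respectively.

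Then, writing $\mathbf{x}^* = \beta (1,\ldots,1)$ and decomposing an initial condition $\mathbf{x}_0 = \mathbf{w}_0 + \alpha_0 (1,\ldots,1)$ close to $\mathbf{x}^*$ (so $\mathbf{w}_0 \in W$ is small and $\alpha_0$ is close to $\beta$), iterating Proposition \ref{prop:translation} yields $\mathbf{x}_k = \mathbf{w}_k + \alpha_0 (1,\ldots,1)$ with $\mathbf{w}_{k+1} = \tilde E|_W(\mathbf{w}_k)$. Applying Proposition \ref{prop:translation} at $\mathbf{x} = 0$ shows that $0$ is a fixed point of $\tilde E|_W$, so Theorem \ref{thm:stability2} delivers $\mathbf{w}_k \to 0$ in $W$ and hence $\mathbf{x}_k \to \alpha_0 (1,\ldots,1)$---the unique translate of $\mathbf{x}^*$ sharing the centroid of $\mathbf{x}_0$. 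The component of $\mathbf{x}_k - \mathbf{x}^*$ orthogonal to $(1,\ldots,1)$ therefore decays to zero, which is the sense in which $\mathbf{x}^*$ is asymptotically stable in $\mathbb{R}^n$.
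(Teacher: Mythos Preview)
Your argument is correct and follows the same route as the paper: decompose $\mathbb{R}^n = W \oplus \operatorname{span}\{(1,\ldots,1)\}$, use translation equivariance (Proposition~\ref{prop:translation}) to reduce the dynamics to $W$, and invoke Theorem~\ref{thm:stability2} there. You are more careful on two points the paper leaves implicit---the centroid-preservation computation (via oddness of $\varphi'$) that actually justifies $\tilde E:W\to W$, and the observation that literal pointwise asymptotic stability in $\mathbb{R}^n$ fails along $(1,\ldots,1)$, so the corollary must be read modulo translations.
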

 
 \begin{proof}
For any $x\in\mathbb{R}^n$, $x=w+\alpha(1,\dots,1)$ for $w\in W$ and $\alpha\in\mathbb{R}$. We see $\alpha(1,\dots,1)$ simply produces a translation of $\tilde E$ by $\alpha$, which does not affect asymptotic stability. By Theorem \ref{thm:stability2}, an equilibrium point of the form $\mathbf{x}^*\in\operatorname{span}\cbr{(1,\dots,1)}$ is asymptotically stable for $\tilde E$ restricted to $W$, so it is also stable on $\tilde E:\mathbb{R}^n\to\mathbb{R}^n$.
 \end{proof}
 Because of asymptotic stability, with each iteration, points get closer to the equilibrium. So if $\tilde E(\mathbf{x})=\mathbf{y}$ and $|\mathbf{x}-\mathbf{x}^*|$ is small enough, it should be true that $|\mathbf{y}-\mathbf{x}^*|<|\mathbf{x}-\mathbf{x}^*|$. We see this by the following:
 \begin{equation}
     \mathbf{y}-\mathbf{x}^*=D\tilde E(\mathbf{x}^*)(\mathbf{x}-\mathbf{x}^*)+o(|\mathbf{x}-\mathbf{x}^*|)
 \end{equation}
 Without loss of generality we look at orthogonal components $\mathbf{y}-\mathbf{x}^*\perp \mathbf{x}^*$,
 \begin{equation}
     \begin{split}
         |\mathbf{y}-\mathbf{x}^*|^2&=\langle\mathbf{y}-\mathbf{x}^*,\mathbf{y}-\mathbf{x}^*\rangle\\
         &\leq\langle D\tilde E(\mathbf{x}^*)(\mathbf{x}-\mathbf{x}^*),\mathbf{y}-\mathbf{x}^*\rangle+\langle\varepsilon(\mathbf{x}-\mathbf{x}^*),\mathbf{y}-\mathbf{x}^*\rangle\\
         &\leq \lambda_{\max}|\mathbf{x}-\mathbf{x}^*||\mathbf{y}-\mathbf{x}^*|+\varepsilon|\mathbf{x}-\mathbf{x}^*||\mathbf{y}-\mathbf{x}^*|
     \end{split}
 \end{equation}
 So we see
 \begin{equation}
     |\mathbf{y}-\mathbf{x}^*|\leq(\lambda_{\max}+\varepsilon)|\mathbf{x}-\mathbf{x}^*|.
 \end{equation}
Noting that $\lambda_{\max}<1$ and $\varepsilon$ can be made arbitrarily small yields
\begin{equation}\label{eq:fixpt estimate}
    |\mathbf{y}-\mathbf{x}^*|<|\mathbf{x}-\mathbf{x}^*|
\end{equation}
as desired.

Earlier we defined $\tilde E(\mathbf{x})$ for a fixed $n$ and $t\leq t^*$. Now we define for $\mathbf{x}\in \mathbf{R}^n$
\begin{equation}
    \tilde E^t_n(\mathbf{x}):=\mathbf{z}=\tilde y_t^\mathbf{z}(\mathbf{x})
\end{equation}

\begin{lemma}\label{lem: game splitting}
    Suppose $\mathbf{x}$ is sufficiently close to a fixed point $\mathbf{x}^*$, where ${\bf x}^*$ is sufficiently spread out (Definition \ref{def:spread}), so that
    \begin{equation*}
      x_1\leq \dots\leq x_{k_1}\leq x_{k_1+1}\leq \dots\leq x_{k_2}\leq \dots \leq x_{k_i}  
    \end{equation*} 
    are such that $x_{k_m+1}-x_{k_m}>r$ for $1\leq m\leq i-1$. Then 
    \begin{equation}
        \tilde E_n^t(\mathbf{x})=(\tilde E_{k_1}^{t_1}(x_1,\dots,x_{k_1}),\tilde E_{k_2-k_1}^{t_2}(x_{k_1+1},\dots, x_{k_2}),\dots, \tilde E_{k_i-k_{i-1}}^{t_i}(x_{k_{i-1}+1},\dots, x_{k_i}))
    \end{equation}
    where $t_m=\frac{t(k_{m+1}-k_m)}{n}$.
\end{lemma}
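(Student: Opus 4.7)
The plan is to show that the well-separated cluster structure of $\mathbf{x}^*$ is inherited by both $\mathbf{x}$ and by $\mathbf{y} := \tilde E_n^t(\mathbf{x})$, and then to exploit the compact support of $\varphi'$ to decouple the defining fixed-point equation cluster-by-cluster.

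First I would fix $\delta > 0$ so that $x^*_{k_m+1} - x^*_{k_m} > r + 2\delta$ for $1 \leq m \leq i-1$, which is possible by Definition \ref{def:spread}, and interpret ``sufficiently close'' as $\enVert{\mathbf{x} - \mathbf{x}^*} < \delta/2$. This automatically gives $x_{k_m+1} - x_{k_m} > r + \delta$ for every $m$. Since $\mathbf{x}^*$ is asymptotically stable by Theorem \ref{thm:stability2} and the estimate \eqref{eq:fixpt estimate} applies in a neighborhood, we also have $\enVert{\mathbf{y} - \mathbf{x}^*} < \enVert{\mathbf{x} - \mathbf{x}^*} < \delta/2$. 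After checking that $\tilde E$ preserves the component ordering, which follows from the strict monotonicity of $\tilde y_t^{\mathbf{z}}$ established in the first proposition of Section \ref{sec:asymptotic}, we conclude $y_{k_m+1} - y_{k_m} > r$ as well.

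Next, I would use that the smoothness of $\varphi$ together with $\varphi(x)=0$ for $x \geq r$ and evenness forces $\varphi'(s)=0$ for all $|s|\geq r$: on $|s|>r$ this is immediate, and at $|s|=r$ it follows from the smooth matching of zero and nonzero regions. Applying this to the defining equation
\begin{equation*}
    y_j + \frac{t}{n}\sum_{k=1}^n \varphi'(y_j - y_k) = x_j,
\end{equation*}
for any $j$ with $k_{m-1} < j \leq k_m$, every term with $y_k$ in a different cluster vanishes. Writing $N_m := k_m - k_{m-1}$ and $t_m := t N_m / n$, the surviving terms reassemble into
\begin{equation*}
    y_j + \frac{t_m}{N_m}\sum_{k=k_{m-1}+1}^{k_m} \varphi'(y_j - y_k) = x_j,
\end{equation*}
which is exactly the fixed-point equation for the $N_m$-player game with time horizon $t_m$ on initial data $(x_{k_{m-1}+1},\ldots,x_{k_m})$. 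Because $t_m \leq t \leq t^*$, Theorem \ref{thm:well-posed} applies to each sub-game, yielding a unique solution and identifying $(y_{k_{m-1}+1},\ldots,y_{k_m}) = \tilde E_{N_m}^{t_m}(x_{k_{m-1}+1},\ldots,x_{k_m})$.

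The main obstacle is the first step: quantifying ``sufficiently close'' in a way that propagates the cluster separation from $\mathbf{x}^*$ through $\mathbf{x}$ and then to $\mathbf{y}$, which requires combining the local contraction near $\mathbf{x}^*$ with a little care about component ordering. Once the separation for $\mathbf{y}$ is in hand, the decoupling via vanishing of $\varphi'$ between clusters is essentially bookkeeping, and well-posedness of the sub-games is automatic from $t_m \leq t^*$.
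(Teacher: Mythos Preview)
Your decoupling argument in the second half is essentially the same as the paper's, but the first step has a logical gap. You invoke Theorem~\ref{thm:stability2} and the estimate~\eqref{eq:fixpt estimate} to conclude $\enVert{\mathbf{y}-\mathbf{x}^*}<\enVert{\mathbf{x}-\mathbf{x}^*}$ for the \emph{full} game at the multi-cluster fixed point $\mathbf{x}^*$. But Theorem~\ref{thm:stability2} and Corollary~\ref{cor:stability} are stated only for fixed points $\mathbf{x}^*\in\operatorname{span}\{(1,\dots,1)\}$, and the derivation of~\eqref{eq:fixpt estimate} relies on the eigenspace for $\lambda=1$ of $D\tilde E(\mathbf{x}^*)$ being exactly that one-dimensional span. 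At a multi-cluster fixed point the block structure of $B$ gives an $i$-dimensional eigenspace for $\lambda=1$, so the contraction on $W$ does not follow. Worse, the asymptotic stability of general spread-out fixed points is precisely the theorem proved \emph{after} this lemma, using this lemma---so appealing to it here is circular.

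The paper avoids this by running the argument in the opposite direction: it first defines $(y_{k_{m-1}+1},\dots,y_{k_m})$ as the solution of the $m$th \emph{sub-game}, whose fixed point $(x^*_{k_m},\dots,x^*_{k_m})$ \emph{is} a single cluster, so that~\eqref{eq:fixpt estimate} legitimately applies to each block and yields the separation $y_{k_m+1}-y_{k_m}>r$. Only then does it insert the vanishing cross-cluster terms and recognise the full-game equation, invoking uniqueness to identify the concatenated vector with $\tilde E_n^t(\mathbf{x})$. Your direction can be salvaged without circularity by replacing the appeal to~\eqref{eq:fixpt estimate} with mere continuity of $\tilde E_n^t$ at $\mathbf{x}^*$: since $\tilde E_n^t(\mathbf{x}^*)=\mathbf{x}^*$, choosing $\enVert{\mathbf{x}-\mathbf{x}^*}$ small enough forces $\enVert{\mathbf{y}-\mathbf{x}^*}<\delta/2$, which is all you actually need.
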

\begin{proof}
    Let us consider $\tilde E_{k_{m+1}-k_m}^{t_m}(x_{k_m+1},\dots,x_{k_{m+1}})$ for $0\leq m\leq i-1$ and $k_i=n$.
    \begin{equation}
       \tilde E_{k_{m+1}-k_m}^{t_m}(x_{k_m+1},\dots,x_{k_{m+1}})=(y_{k_m+1},\dots,y_{k_{m+1}})
    \end{equation}
    if and only if, for each $k_m+1 \leq j\leq k_{m+1}$
    \begin{equation}
        y_j+\frac{t_m}{k_{m+1}-k_m}\sum_{k=k_m+1}^{k_{m+1}}\varphi'(y_j-y_k)=x_j.
    \end{equation}
    Now we show $y_{k_m+1}-y_{k_m}>r$. Using \eqref{eq:fixpt estimate} and that $x_{k_m}\leq x_{k_{m+1}}^*\leq x_{k_{m+1}}$,
    \begin{equation}
    \begin{split}
       y_{k_m+1}-y_{k_m}&= y_{k_m+1}-x_{k_m+1}^*+x_{k_m+1}^*-x_{k_m}^*+x_{k_m}^*-y_{k_m}\\
       &\geq x_{k_m+1}-x_{k_m+1}^*+x_{k_m+1}^*-x_{k_m}^*+x_{k_m}^*-x_{k_m}\\
       &=x_{k_m+1}-x_{k_m}>r.
    \end{split}
    \end{equation}
    Thus $\varphi'(y_j-y_k)=0$ for $k\geq k_{m+1}+1$ and $k\leq k_m$. So we can write
    \begin{equation}
        y_j+\frac{t_m}{k_{m+1}-k_m}\sum_{k=1}^{n}\varphi'(y_j-y_k)=x_j.
    \end{equation}
    Now taking $\frac{t_m}{k_{m+1}-k_m}=\frac{t}{n}$ yields
    \begin{equation}
        y_j+\frac{t}{n}\sum_{k=1}^{n}\varphi'(y_j-y_k)=x_j.
    \end{equation}
    and this is true if and only if $\tilde E_n^t(x_1,\dots, x_n)=(y_1,\dots, y_n)$.
\end{proof}

This allows us to look at asymptotic behavior for each $\tilde E_{(j_{m}-j_{m-1})}^{t_m}$, which allows us to prove the main result of this section:
\begin{theorem}
    Suppose
	\begin{equation}
		\begin{split}
			\mathbf{x}_{k+1}&=\tilde E_n^t(\mathbf{x}_{k})\\
			\mathbf{x}_0&\in \bb{R}^n
		\end{split}
	\end{equation}
 Then every sufficiently spread out (see Definition \ref{def:spread}) equilibrium point $\mathbf{x}^*$ of the dynamical system is asymptotically stable. 
\end{theorem}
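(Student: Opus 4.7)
The strategy is to combine Lemma~\ref{lem: game splitting}, which decouples the game into independent sub-games on each cluster whenever $\mathbf{x}$ is close to a sufficiently spread out fixed point, with Corollary~\ref{cor:stability}, which gives asymptotic stability of diagonal fixed points for a single cluster. Since $\mathbf{x}^*$ is sufficiently spread out (Definition~\ref{def:spread}), $x_{k_m}^* - x_{k_{m-1}}^* > r$ strictly for $2 \leq m \leq i$. I would choose $\delta > 0$ small enough that on the open ball $U := B_\delta(\mathbf{x}^*)$ every consecutive cluster separation (in the order induced by $\mathbf{x}^*$) remains strictly greater than $r$. Then for every $\mathbf{x} \in U$, Lemma~\ref{lem: game splitting} produces the product decomposition
\begin{equation*}
\tilde E_n^t(\mathbf{x}) = \del{\tilde E_{k_m - k_{m-1}}^{t_m}\del{\mathbf{x}^{(m)}}}_{m=1}^i, \qquad t_m := \frac{t(k_m - k_{m-1})}{n},
\end{equation*}
where $\mathbf{x}^{(m)} := (x_{k_{m-1}+1},\dots,x_{k_m})$. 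Note $t_m \leq t \leq t^*$, so each cluster-level map is a well-defined contraction by Theorem~\ref{thm:well-posed}.

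Next, each cluster-level fixed point $\mathbf{x}^{*,(m)} = (x_{k_m}^*,\dots,x_{k_m}^*)$ is a constant vector, hence lies in $\operatorname{span}\cbr{(1,\dots,1)} \subset \bb{R}^{k_m - k_{m-1}}$. Corollary~\ref{cor:stability}, applied to $\tilde E_{k_m - k_{m-1}}^{t_m}$, therefore gives asymptotic stability of $\mathbf{x}^{*,(m)}$ for its cluster dynamics. For each $m$ this produces a forward-invariant neighborhood $V_m$ of $\mathbf{x}^{*,(m)}$ on which the cluster iterates converge to $\mathbf{x}^{*,(m)}$. I would then shrink $\delta$ once more so that $U$ is contained in the product $V_1 \times \cdots \times V_i$ under the natural identification $\bb{R}^n \simeq \prod_m \bb{R}^{k_m - k_{m-1}}$.

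Finally, I would close the argument with the forward-invariance observation: by the product decomposition of the first paragraph, $\tilde E_n^t$ maps the product $V_1 \times \cdots \times V_i$ into itself, and this product lies in $U$ where Lemma~\ref{lem: game splitting} applies. Thus every iterate $\mathbf{x}_k$ with $\mathbf{x}_0 \in U$ stays in $U$, the splitting persists at every step, and the $m$th cluster component evolves autonomously under $\tilde E_{k_m - k_{m-1}}^{t_m}$ and converges to $\mathbf{x}^{*,(m)}$. Assembling the componentwise limits yields $\mathbf{x}_k \to \mathbf{x}^*$, which is asymptotic stability. The main obstacle is precisely this forward-invariance step: one must verify that the cluster-wise contractivity supplied by Corollary~\ref{cor:stability} keeps each cluster component sufficiently close to its center so that consecutive inter-cluster gaps never drop to $r$, so that Lemma~\ref{lem: game splitting} remains valid at every iteration. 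All other steps are organizational, combining the two prior results in the natural way.
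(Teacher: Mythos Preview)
Your proposal follows essentially the same route as the paper: invoke Lemma~\ref{lem: game splitting} to decouple $\tilde E_n^t$ into cluster maps $\tilde E_{k_m-k_{m-1}}^{t_m}$ near $\mathbf{x}^*$, then apply Corollary~\ref{cor:stability} to each cluster (whose fixed point lies on the diagonal), and assemble. The paper's proof is terser and leaves the forward-invariance bookkeeping implicit; you are right to flag it as the one substantive point to check.

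There is a small slip in your containment logic: you first shrink $\delta$ so that $U=B_\delta(\mathbf{x}^*)$ is contained in $V_1\times\cdots\times V_i$, and then assert that ``this product lies in $U$'', which is the reverse inclusion. What you actually need is to first shrink the $V_m$ (asymptotic stability guarantees arbitrarily small forward-invariant neighborhoods) so that on $V_1\times\cdots\times V_i$ the inter-cluster gaps remain strictly above $r$; then the product is forward-invariant for $\tilde E_n^t$ and Lemma~\ref{lem: game splitting} applies there at every iterate. Any ball $U$ inside that product then works. With this adjustment your argument is complete and matches the paper's.
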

\begin{proof}
  Let $\mathbf{x}^*$ be an equilibrium. Then it has the following form
    \begin{equation*}
      x^*_1= \dots= x^*_{k_1}< x^*_{k_1+1}= \dots= x^*_{k_2}< \dots< x^*_{k_i-1} = x^*_{k_i}  
    \end{equation*} 
    with $x^*_{k_m+1}-x^*_{k_m}>r$.
  Using Lemma \ref{lem: game splitting}, for $|\mathbf{x}-\mathbf{x}^*|<\varepsilon$
  \begin{equation}
     \tilde E_n^t(\mathbf{x})=(\tilde E_{k_1}^{t_1}(x_1,\dots,x_{k_1}),\tilde E_{k_2-k_1}^{t_2}(x_{k_1+1},\dots, x_{k_2}),\dots, \tilde E_{k_i-k_{i-1}}^{t_i}(x_{k_{i-1}+1},\dots, x_{k_i}))  
  \end{equation}
  Then by Corollary \ref{cor:stability}, an equilibrium point $\mathbf{x}^{m*}\in\operatorname{span}\cbr{(1,\dots,1)}$ of the dynamical system with $\tilde E_{(j_{m}-j_{m-1})}^{t_m}$ is asymptotically stable. So $\mathbf{x}^*=(\mathbf{x}^{1*},\mathbf{x}^{2*},\dots,\mathbf{x}^{i*})$ is asymptotically stable for the dynamical system with $\tilde E_n^t$.
\end{proof}
	
	\section{Numerical simulations} \label{sec:numerics}
	
	In this section we report on some numerical simulations, based on the algorithm outlined at the end of Section \ref{sec:computation}, that confirm the main result of Section \ref{sec:asymptotic}.
	We examined four different empirical population measures distributed across a vector of 1000 evenly spaced points between $[-4.995, 4.995]$.
	The coupling function was given to be $G(x,m) = \int \varphi(x-z)\dif m(z)$ where $-\varphi$ is a standard ``bump function'' of the form
	\begin{equation} \label{eq:bump}
		-\varphi(x) = \begin{cases}
			\exp\del{-\frac{1}{1-x^2}}, \quad -1 < x < 1,\\
			0, \quad \text{otherwise}.
		\end{cases}
	\end{equation}
	\begin{center} 
		\includegraphics[width=.8\textwidth]{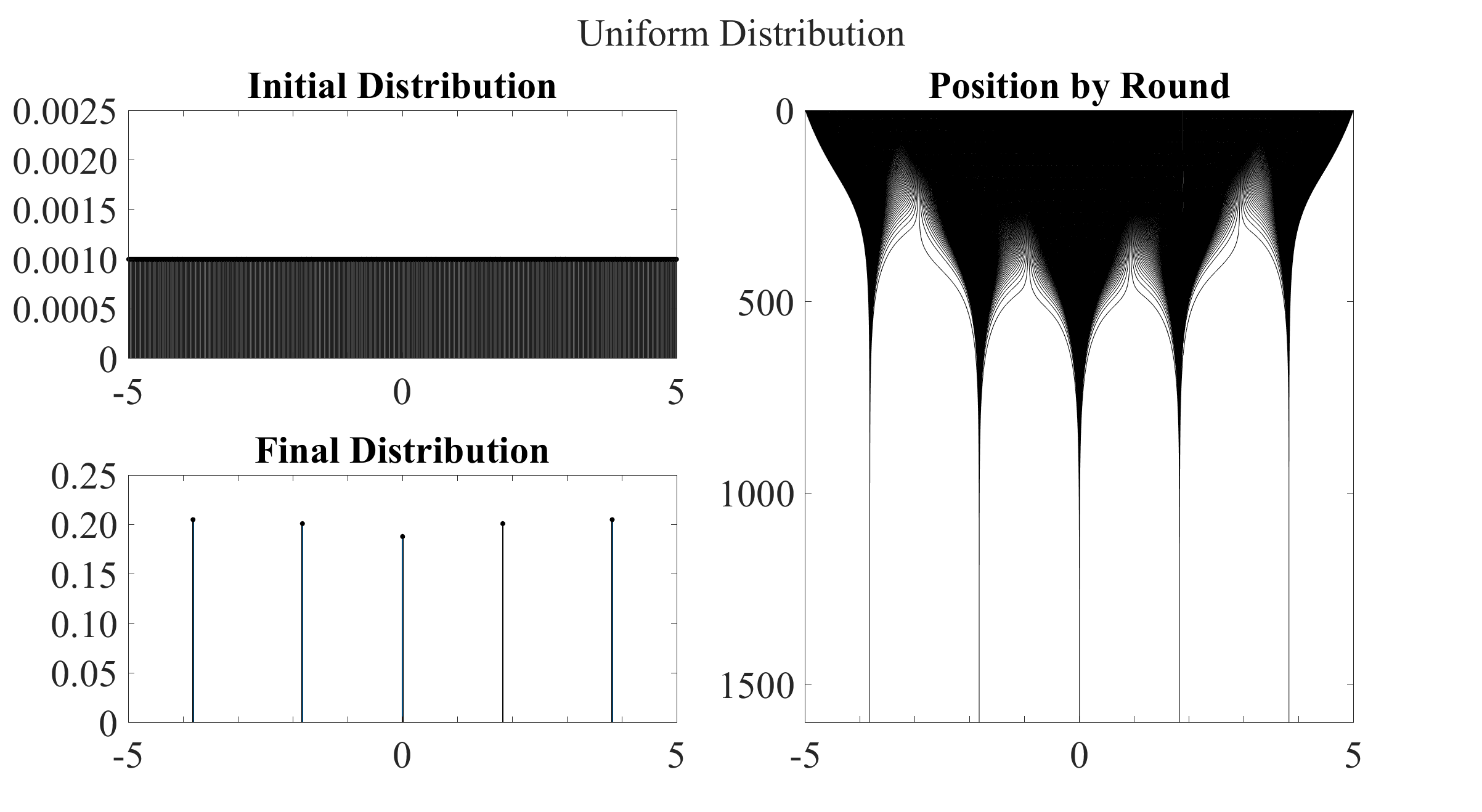}
		\begin{tabular}{|M{.06\textwidth}|M{.10\textwidth}|M{.12\textwidth}|M{.12\textwidth}|M{.18\textwidth}|M{.09\textwidth}|M{.11\textwidth}|M{.11\textwidth}|}
			\hline
			Group & Coalescing Point & Algorithm Iterations to Coalesce & Total \% of Population & Range of Initial Positions & Average Initial Position & Average Total Movement & Average Total Cost \\
			\hline
			1 & -3.8225 & 1260 & 20.50\% & [-4.9950, -2.9550] & -3.9750 & 0.5335 & -241.9120\\
			\hline
			2 & -1.8275 & 1417 & 20.10\% & [-2.9450, -0.9450] & -1.9450 & 0.5134 & -225.6389\\
			\hline
			3 & 0.0000 & 1508 & 18.80\% & [-0.9350, 0.9350] & 0.0000 & 0.4700 & -208.5507\\
			\hline
			4 & 1.8275 & 1417 & 20.10\% & [0.9450, 2.9450] & 1.9450 & 0.5134 & -225.6389\\
			\hline
			5 & 3.8225 & 1260 & 20.50\% & [2.9550, 4.9950] & 3.9750 & 0.5335 & -241.9120\\
			\hline
		\end{tabular}
	\end{center}
	\begin{center} 
		\includegraphics[width=.8\textwidth]{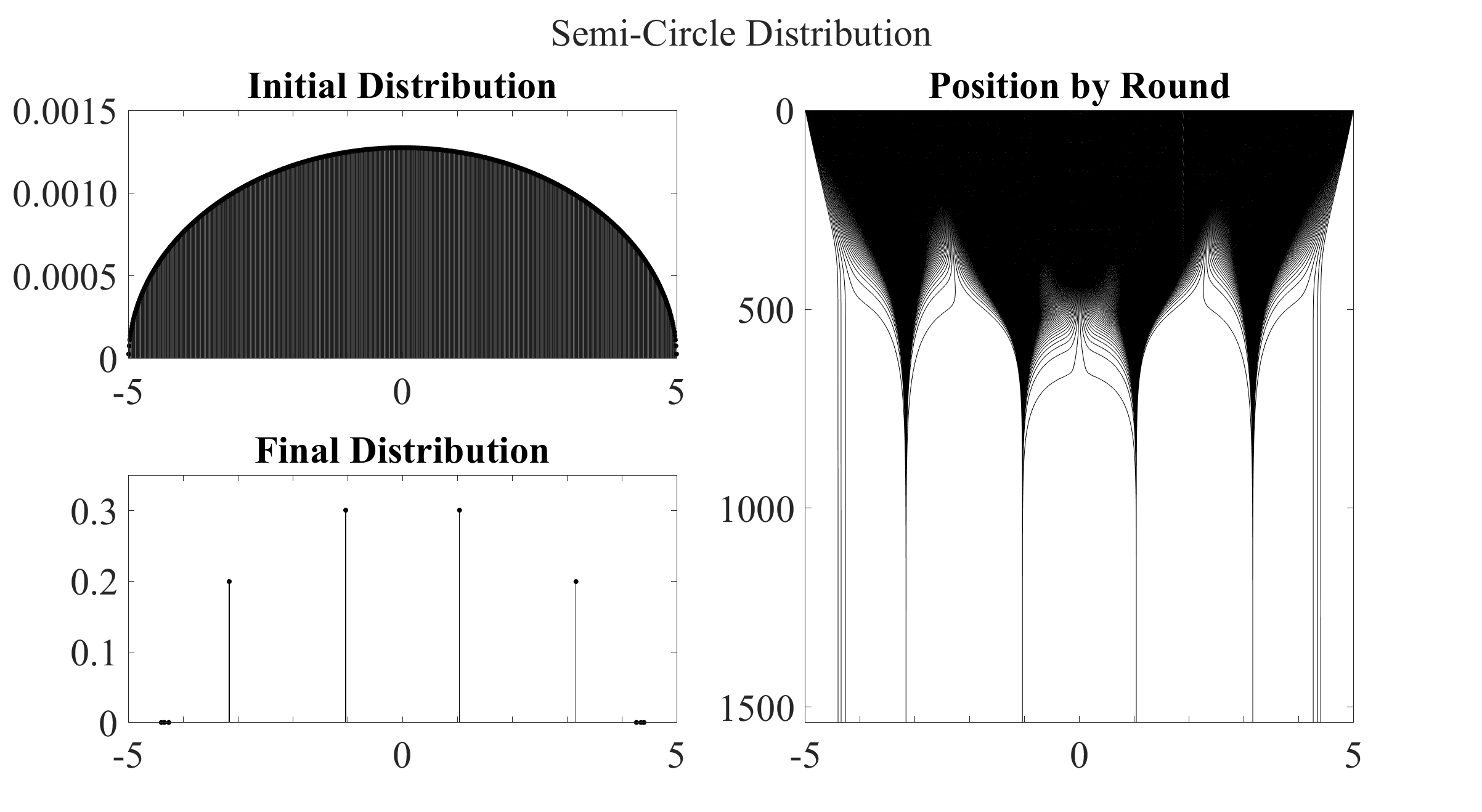}
		\begin{tabular}{|M{.06\textwidth}|M{.10\textwidth}|M{.12\textwidth}|M{.12\textwidth}|M{.18\textwidth}|M{.09\textwidth}|M{.11\textwidth}|M{.11\textwidth}|}
			\hline
			Group & Coalescing Point & Algorithm Iterations to Coalesce & Total \% of Population & Range of Initial Positions & Average Initial Position & Average Total Movement & Average Total Cost \\
			\hline
			1 & -3.1619 & 1535 & 19.94\% & [-4.9650, -2.4650] & -3.4983 & 0.6729 & -227.6249\\
			\hline
			2 & -1.0379 & 1169 & 30.03\% & [-2.4550,-0.0050] & -1.2026 & 0.6533 & -321.4321\\
			\hline
			3 & 1.0379 & 1169 & 30.03\% & [0.0050, 2.4550] & 1.2026 & 0.6533 & -321.4321\\
			\hline
			4 & 3.1619 & 1535 & 19.94\% & [2.4650, 4.9650] & 3.4983 & 0.6729 & -227.6249\\
			\hline
		\end{tabular}
	\end{center}
	\begin{center} 
		\includegraphics[width=.8\textwidth]{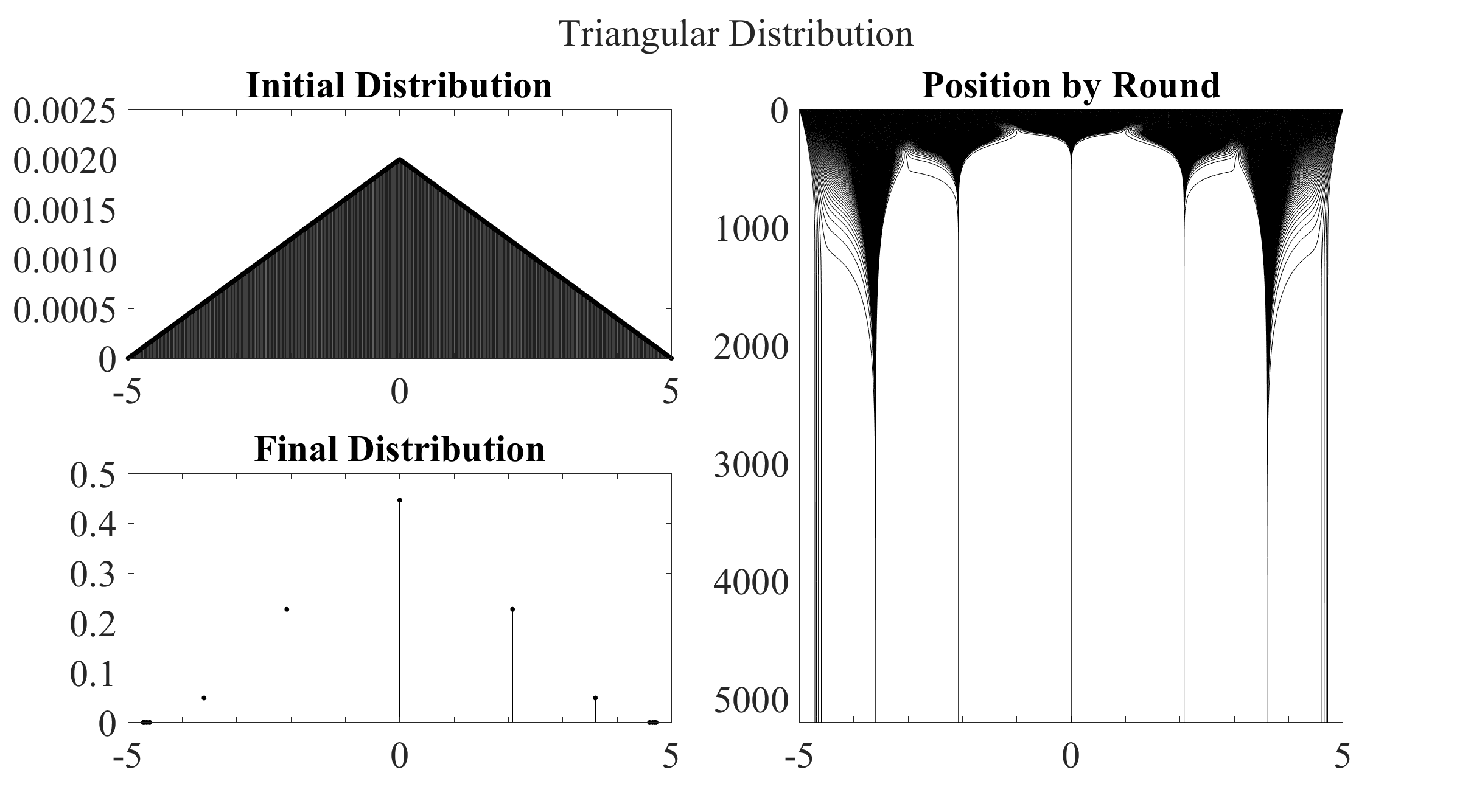}
		\begin{tabular}{|M{.06\textwidth}|M{.10\textwidth}|M{.12\textwidth}|M{.12\textwidth}|M{.18\textwidth}|M{.09\textwidth}|M{.11\textwidth}|M{.11\textwidth}|}
			\hline
			Group & Coalescing Point & Algorithm Iterations to Coalesce & Total \% of Population & Range of Initial Positions & Average Initial Position & Average Total Movement & Average Total Cost \\
			\hline
			1 & -3.5989 & 5131 & 4.93\% & [-4.9550, -3.4350] & -3.9527 & 0.6052 & -206.4532\\
			\hline
			2 & -2.0759 & 1355 & 22.75\% & [-3.4250, -1.2850] & -2.2094 & 0.6614 & -943.5018\\
			\hline
			3 & 0.0000 & 639 & 44.65\% & [-1.2750, 1.2750] & 0.0000 & 0.6087 & -1869.5074\\
			\hline
			4 & 2.0759 & 1355 & 22.75\% & [1.2850, 3.4250] & 2.2094 & 0.6614 & -943.5018\\
			\hline
			5 & 3.5989 & 5131 & 4.93\% & [3.4350, 4.9550] & 3.9527 & 0.6052 & -206.4532\\
			\hline
		\end{tabular}
	\end{center}	
	\begin{center} 
		\includegraphics[width=.8\textwidth]{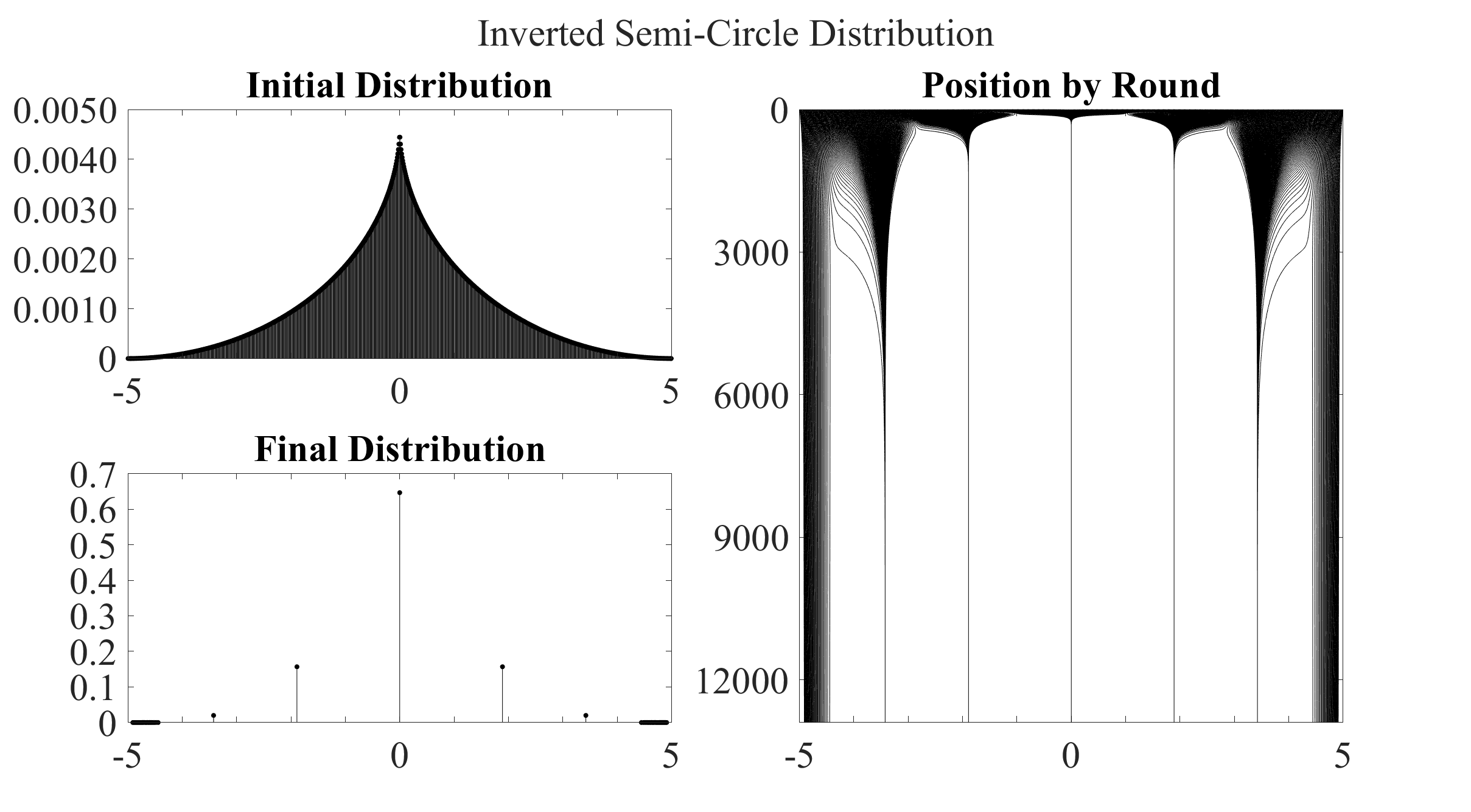}
		\begin{tabular}{|M{.06\textwidth}|M{.10\textwidth}|M{.12\textwidth}|M{.12\textwidth}|M{.18\textwidth}|M{.09\textwidth}|M{.11\textwidth}|M{.11\textwidth}|}
			\hline
			Group & Coalescing Point & Algorithm Iterations to Coalesce & Total \% of Population & Range of Initial Positions & Average Initial Position & Average Total Movement & Average Total Cost \\
			\hline
			1 & -3.4234 & 12806 & 2.00\% & [-4.6650, -3.1550] & -3.6028 & 0.4675 & -209.4471\\
			\hline
			2 & -1.8900 & 1604 & 15.68\% & [-3.1450, -1.2850] & -1.9927 & 0.5479 & -1653.3058\\
			\hline
			3 & 0.0000 & 406 & 64.62\% & [-1.2750, 1.2750] & 0.0000 & 0.5369 & -6841.9609\\
			\hline
			4 & 1.8900 & 1604 & 15.68\% & [1.2850, 3.1450] & 1.9927 & 0.5479 & -1653.3058\\
			\hline
			5 & 3.4234 & 12806 & 2.00\% & [3.1550, 4.6650] & 3.6028 & 0.4675 & -209.4471\\
			\hline
			
		\end{tabular}
	\end{center}
	
	In the graphs above, the initial and final distribution are graphed by plotting points $(x,y)$ where $x$ is the location and $y$ is the total proportion of the population holding position $x$.
	The ``Position by Round'' graph displays only the distribution of $x$-values at each round, starting from the top (the population density itself is not directly visible in this graph).
	In all four cases, the iterative process of each agent minimizing their own total cost function divided the majority of the population into clusters, but only the Uniform Distribution simulation saw every agent eventually belong to a cluster. The other three distributions ended the simulation with a small number of isolated agents at the extreme ends of each distribution in the lowest density regions. These isolated agents represented $0.0431\%$ of the Semi-Circle Distribution, $0.0065\%$ of the Triangular Distribution, and $0.0224\%$ of the Inverted Semi-Circle Distribution. Isolated agents in these low density regions begin the iterative process with the same ingathering behavior as their interior neighbors and move towards the distribution's mean for many iterations. However as their inlying neighbors start moving inwards faster towards a nearby accumulating mass of other agents, the isolated agents get left behind, and the innermost isolated agents begin moving outwards towards the still ingathering, most extreme isolated agents. These low density regions of isolated agents may eventually coalesce into groups, but the meager benefit of gathering in such low density regions results in incredibly slow movement from residing agents. Consequently, waiting for these regions to coalesce would take more iterations than can be reasonably observed.
	
	Conversely, agents in the highest density regions tend to coalesce the fastest. Group 3 in the Inverted Semi-Circle Distribution simulation comprised $64.62\%$ of the population and coalesced in a speedy 406 iterations. Altogether, the Uniform Distribution had completely coalesced into clusters after 1508 iterations with its tails coalescing first after 1260 iterations. This observed behavior of the highest density regions coalescing the fastest and lowest density regions moving too slow to coalesce completely aligns with the structure of the individual's total cost function. High density regions present a larger reward for gathering, so individual agents can justify more movement in a single iteration. The opposite is true for low density regions. As a result, the region centered about the mean contained the largest group at the end of the simulation in almost every case. The Uniform Distribution simulation notably deviates from this trend with the outermost groups containing $1.7\%$ more of the population than the central group at the mean. This is a direct result of the distribution's fat tails. Agents in the tail ends of the distribution are markedly incentivized to get away from the empty regions next to them and generally move inwards quickly until enough of their inlying neighbors find their accumulating mass sufficiently attractive to meet them to form a single mass. In the Uniform Distribution simulation, this quick piling up of the already dense tails produces a large, attractive mass that outsizes the more inward accumulating masses. 
	
	Moreover, agent mobility varied across distribution and final group assignment. The average total movement across all iterations to coalescence for a single agent was 0.5135 in the Uniform Distribution simulation, 0.6609 in the Semi-Circle Distribution simulation, 0.6323 in the Triangular Distribution simulation, and 0.5374 in the Inverted Semi-Circle Distribution simulation. Due to their fatter tails, the Uniform and Semi-Circle Distribution simulations had decreasing average total movement from the outermost groups to the innermost group(s), which is a result of the outermost agents trying to rapidly get away from the empty region next to them. The least mobile groups in the Triangular and Inverted Semi-Circle Distribution simulations with thinner tails were the outermost groups, but similar to the Uniform and Semi-Circle Distribution simulations, the middling groups had a higher average total movement then the innermost group. Low population density in the outermost regions slows down the groups and negatively affects the groups' average total movement. Meanwhile, the middling regions may not have completely empty regions next to them, the outermost regions are comparatively empty to the innermost regions, producing the same effect on the middling and innermost groups as the Uniform and Semi-Circle Distribution simulations. Within groups, agents that happened to begin the simulations near the coalescing point for their group did not have to move very far to converge on their final position, while agents that happened to begin further away from the coalescing point for their group had to move significantly to converge on their final position. 
	
	Population density had the biggest impact on average total cost. Groups with the highest population density had the lowest average total cost accumulated across all rounds to coalescence. Like total movement for individual agents, the agents that happened to begin the simulations near the coalescing point for their group had lower total cost than the agents that happened to begin the simulations farther away from the coalescing point for their group.
	
	Finally, in every case we examined, the final distribution was narrower than the initial distribution, but not every agent consistently moved towards the mean. $35.20\%$ of agents in the Uniform Distribution, $41.04\%$ of agents in the Semi-Circle Distribution, $23.26\%$ of agents in the Triangular Distribution, and $16.96\%$ of agents in the Inverted Semi-Circle Distribution ended up farther away from the mean of the distribution in their final position than in their initial position. However, the final coalescing point of every group is closer to the mean of each distribution than their average initial position. The narrowing of the population in addition to the difference between the final coalescing point and average initial position of each group indicates an overall mean-convergent behavior, but the existence of separate groups that do not all converge to the mean as well as the not insignificant portion of each population distribution moving away from the mean implies something interesting is going on. If the population in all four simulations has an overall ingathering behavior, why do distinct groups form? The observed effect of the iterative process of each agent minimizing their own total cost function is not wholesale ``polarization'' because the vast majority of agents end the simulation in more moderate positions than they started in. Rather in all four simulations, we can see the flight of the relative-moderate who is attracted to the mass of agents with more extreme positions that are willing to compromise and exhibit mean-converging behavior.

	Altogether, a population distribution's density and concavity affects the formation of groups, especially at the tails.

	\section{Conclusion} \label{sec:conclusion}
	
	In this paper we have studied a simple mean field game in which players have an incentive to congregate, and we have analyzed the resulting dynamical system formed by iterating the game.
	We have identified the fixed points and rigorously proved the asymptotic stability for this dynamical system.
	Our numerical simulations both demonstrate the validity of these results and also provide thought-provoking details about the dynamics.
	We now conclude this paper with some open questions for both theory and applications.
	
	\subsection{Equilibrium for a long time horizon}
	
	One natural question is whether the stable fixed points of the dynamical system  \eqref{eq:Etilde} correspond (at least approximately) to Nash equilibrium points for the game with a very long time horizon.
	A heuristic argument in favor of this conjecture is as follows.
	We take $G(y,\mu) = \int \varphi(y-z)\dif \mu(z)$ and consider its discrete analog $g(y,{\bf z}) = \frac{1}{n}\sum_{k=1}^n \varphi(y-z_k)$.
	Recall that the cost to each player moving from position $x$ to $y$ is
	\begin{equation*}
		\frac{\abs{x-y}^2}{2t} + \frac{1}{n}\sum_{k=1}^n \varphi(y-z_k).
	\end{equation*}
	Taking $t \to \infty$, we see that $x$ is almost irrelevant.
	The optimal strategy is essentially to find a minimizer of $y \mapsto \sum_{k=1}^n \varphi(y-z_k)$.
	If ${\bf z} = (z_1,\ldots,z_n)$ is to represent an equilibrium measure, then each $z_j$ must in fact be a minimizer of this function.
	Assume $-\varphi$ is a bump function as in \eqref{eq:bump}.
	Glancing at the graph of $y \mapsto \sum_{k=1}^n \varphi(y-z_k)$ suggests ${\bf z}$ is an equilibrium if and only if the points $z_1,\ldots,z_n$ form equal sized clusters $z_1 = \cdots = z_{k} < z_{k + 1} = \cdots = z_{2k} < \cdots < z_{mk}$ such that $z_{(j+1)k} - z_{jk}$ is sufficiently large for each $j$.
	Hence ${\bf z}$ should also be a stable fixed point of the dynamical system \eqref{eq:Etilde} according to Theorem \ref{thm:fixpoint}.
	Since we cannot expect the equilibrium to be unique for an arbitrary time horizon, this infinite time limit case might provide some meaningful way of selecting among multiple equilibria for large time horizon games.
	We intend to investigate this in future research.
	
	\subsection{Opinion dynamics}
	
	One particularly compelling application for our model is where the population measure represents the distribution of people on a spectrum of opinions, e.g.~a single-issue political spectrum.
	The iterated game could model a process in which people change their positions incrementally, trying to find communities with valuable social identities of like-minded peers, or perhaps trying to build coalitions to win political capital.
	On an optimistic note, we observe that in our simulations the entire population becomes more moderate as a whole, and most individuals elect to compromise to join less extreme communities.
	However, the population's separation into distinct, consolidated groups gives dissidents platforms with greater leverage than they began with, and the most extreme dissidents are left behind entirely with their voices completely drowned out by more popular groups.
	Our model shows that the individual proclivity to group can reshape a population at large, but the effects of the population's new shape on society are harder to determine. 
	It would be interesting to use game theoretic models to further investigate the impact of grouping on political or other social systems. 
	
	\section*{Declarations}
	
	\noindent \textbf{Ethical Approval.} Not applicable.

	\noindent {\bf Competing interests.} The authors declare that they do not have any conflicts of interests.

	\noindent {\bf Authors' contributions.} The authors have each contributed equally to the present manuscript.

	\noindent {\bf Funding.} The authors gratefully acknowledge the support of the National Science Foundation through research grant DMS-2045027.
	
	\noindent {\bf Data Availability Statement.} Data sharing not applicable to this article as no external datasets were generated or analyzed during the current study.
	
	\bibliographystyle{alpha}
	\bibliography{../../mybib/mybib}

\begin{thebibliography}{BFY13}

\bibitem[BF19]{bardi2019non}
Martino Bardi and Markus Fischer.
\newblock On non-uniqueness and uniqueness of solutions in finite-horizon mean
  field games.
\newblock {\em ESAIM: Control, Optimisation and Calculus of Variations}, 25:44,
  2019.

\bibitem[BFY13]{bensoussan2013mean}
Alain Bensoussan, Jens Frehse, and Phillip Yam.
\newblock {\em Mean field games and mean field type control theory}.
\newblock Springer, 2013.

\bibitem[BZ20]{bayraktar2020non}
Erhan Bayraktar and Xin Zhang.
\newblock On non-uniqueness in mean field games.
\newblock {\em Proceedings of the American Mathematical Society},
  148(9):4091--4106, 2020.

\bibitem[Car10]{cardaliaguet2010notes}
Pierre Cardaliaguet.
\newblock Notes on mean field games.
\newblock {\em from P.-L. Lions' lectures at College de France}, 2010.

\bibitem[CD17a]{carmona2017probabilistic}
R~Carmona and F~Delarue.
\newblock {Probabilistic Theory of Mean Field Games: vol. I, Mean Field FBSDEs,
  Control, and Games}.
\newblock {\em Stochastic Analysis and Applications, Springer Verlag}, 2017.

\bibitem[CD17b]{carmona2017probabilisticII}
R~Carmona and F~Delarue.
\newblock {Probabilistic Theory of Mean Field Games: vol. II, Mean Field Games
  with Common Noise and Master Equations}.
\newblock {\em Stochastic Analysis and Applications. Springer Verlag}, 2017.

\bibitem[Cir16]{cirant2016stationary}
Marco Cirant.
\newblock Stationary focusing mean-field games.
\newblock {\em Communications in Partial Differential Equations},
  41(8):1324--1346, 2016.

\bibitem[Col94]{coleman1994foundations}
James~S Coleman.
\newblock {\em Foundations of social theory}.
\newblock Harvard university press, 1994.

\bibitem[Con64]{converse1964nature}
Philip~E Converse.
\newblock The nature of belief systems in mass publics.
\newblock In David Apter, editor, {\em Ideology and Discontent}. New York: Free
  Press, 1964.

\bibitem[CT19]{cirant2019time}
Marco Cirant and Daniela Tonon.
\newblock Time-dependent focusing mean-field games: the sub-critical case.
\newblock {\em Journal of Dynamics and Differential Equations}, 31:49--79,
  2019.

\bibitem[For14]{forsyth2014group}
Donelson~R Forsyth.
\newblock {\em Group dynamics}.
\newblock Wadsworth Cengage Learning, 2014.

\bibitem[LL07]{lasry07}
Jean-Michel Lasry and Pierre-Louis Lions.
\newblock Mean field games.
\newblock {\em Japanese Journal of Mathematics}, 2(1):229--260, 2007.

\bibitem[SH98]{stuart1998dynamical}
Andrew Stuart and Anthony~R Humphries.
\newblock {\em Dynamical systems and numerical analysis}, volume~2.
\newblock Cambridge University Press, 1998.

\bibitem[VD13]{van2013ideology}
Teun~A Van~Dijk.
\newblock Ideology and discourse analysis.
\newblock In {\em The Meaning of Ideology}, pages 110--135. Routledge, 2013.

\end{thebibliography}
	
\end{document}